\newcommand{\IN}{\mathbb N}
\newcommand{\IQ}{\mathbb Q}
\newcommand{\IC}{\mathbb C}
\newcommand{\IT}{\mathbb T}
\newcommand{\U}{\mathcal U}
\newcommand{\C}{\mathcal C}
\newcommand{\w}{\omega}
\newcommand{\pr}{\mathrm{pr}}
\newcommand{\Ra}{\Rightarrow}
\newcommand{\korin}[2]{\!\sqrt[#1]{#2}}
\newcommand*{\defeq}{\stackrel{\mathsmaller{\mathsf{def}}}{=}}
\newtheorem{theorem}{Theorem}[section]
\newtheorem{claim}[theorem]{Claim}
\newtheorem{proposition}[theorem]{Proposition}
\newtheorem{lemma}[theorem]{Lemma}
\newtheorem{problem}[theorem]{Problem}
\theoremstyle{definition}
\newtheorem{remark}[theorem]{Remark}
\newtheorem*{definition*}{Definition}
\title{Subgroups of categorically closed semigroups}
\author{Taras Banakh and Serhii Bardyla}
\address{T.Banakh: Ivan Franko National University of Lviv (Ukraine) and Jan Kochanowski University in Kielce (Poland)}
\email{t.o.banakh@gmail.com}
\address{S.~Bardyla: University of Vienna, Institute of Mathematics, Kurt G\"{o}del Research Center (Austria)}
\thanks{The second author was supported by the Austrian Science Fund FWF (Grant  M 2967).}
\email{sbardyla@yahoo.com}
\subjclass[2020]{22A15, 20M18, 54B30, 54D35, 54H11, 54H12}
\keywords{$\C$-closed semigroup, subgroup}
\begin{document}
\begin{abstract}Let $\C$ be a class of topological semigroups. A semigroup $X$ is called (1)  {\em $\C$-closed} if $X$ is closed in every topological semigroup $Y\in\C$ containing $X$ as a discrete subsemigroup, 
(2) {\em ideally $\C$-closed} if for any ideal $I$ in $X$ the quotient semigroup $X/I$ is $\C$-closed; (3) {\em absolutely $\C$-closed} if for any homomorphism $h:X\to Y$ to a topological semigroup $Y\in\C$, the image $h[X]$ is closed in $Y$, (4) {\em injectively $\C$-closed} (resp. {\em $\C$-discrete\/}) if for any injective homomorphism $h:X\to Y$ to a topological semigroup $Y\in\C$, the image $h[X]$ is closed (resp. discrete) in $Y$. Let $\mathsf{T_{\!z}S}$ be  the class of Tychonoff  zero-dimensional topological semigroups. We prove the following results: (i) for any ideally $\mathsf{T_{\!z}S}$-closed semigroup $X$, every subgroup of the center $Z(X)=\{z\in X:\forall x\in X\;\;(xz=zx)\}$ is bounded; (ii) for any $\mathsf{T_{\!z}S}$-closed semigroup $X$, every subgroup of the ideal center $I\!Z(X)=\{z\in Z(X):zX\subseteq Z(X)\}$ is bounded; (iii) for any $\mathsf{T_{\!z}S}$-discrete or injectively $\mathsf{T_{\!z}S}$-closed semigroup $X$, every subgroup of $Z(X)$ is finite,  (iv) for any viable idempotent $e$ in an ideally (and absolutely) $\mathsf{T_{\!z}S}$-closed semigroup $X$, the maximal subgroup $H_e$ is ideally (and absolutely) $\mathsf{T_{\!z}S}$-closed and has bounded (and finite) center $Z(H_e)$. 
\end{abstract}

\maketitle

\section{Introduction and Main Results}

In many cases,  completeness properties of various objects of General Topology or  Topological Algebra can be characterized externally as closedness in ambient objects. For example, a metric space $X$ is complete if and only if $X$ is closed in any metric space containing $X$ as a subspace. A uniform space $X$ is complete if and only if $X$ is closed in any uniform space containing $X$ as a uniform subspace. A topological group $G$ is Ra\u\i kov complete  if and only if it is closed in any topological group containing $G$ as a subgroup.

On the other hand, for topological semigroups there are no reasonable notions of (inner) completeness. Nonetheless we can define many completeness properties of semigroups via their closedness in ambient topological semigroups.

A {\em topological semigroup} is a topological space $X$ endowed with a continuous associative binary operation $X\times X\to X$, $(x,y)\mapsto xy$.

\begin{definition*} Let $\C$ be a class of topological semigroups.
A topological
semigroup $X$ is called
\begin{itemize}
\item {\em $\C$-closed} if for any isomorphic topological
embedding $h:X\to Y$ to a topological semigroup $Y\in\C$
the image $h[X]$ is closed in $Y$;
\item {\em injectively $\C$-closed} if for any injective continuous homomorphism $h:X\to Y$ to a topological semigroup $Y\in\C$ the image $h[X]$ is closed in $Y$;
\item {\em absolutely $\C$-closed} if for any continuous homomorphism $h:X\to Y$ to a topological semigroup $Y\in\C$ the image $h[X]$ is closed in $Y$.
\end{itemize}
\end{definition*}

For any topological semigroup we have the implications:
$$\mbox{absolutely $\C$-closed $\Ra$ injectively $\C$-closed $\Ra$ $\C$-closed}.$$

\begin{definition*} A semigroup $X$ is defined to be ({\em injectively, absolutely}) {\em $\C$-closed\/} if so is $X$ endowed with the discrete topology.
\end{definition*}

We will be interested in the (absolute, injective) $\C$-closedness for the classes:
\begin{itemize}
\item $\mathsf{T_{\!1}S}$ of topological semigroups satisfying the separation axiom $T_1$;
\item $\mathsf{T_{\!2}S}$ of Hausdorff topological semigroups;
\item $\mathsf{T_{\!z}S}$ of Tychonoff zero-dimensional topological
semigroups.
\end{itemize}
A topological space satisfies the separation axiom $T_1$ if all its finite subsets are closed.
A topological space is {\em zero-dimensional} if it has a base of
the topology consisting of {\em clopen} (=~closed-and-open) sets. It is well-known (and easy to see) that every zero-dimensional $T_1$ topological space is Tychonoff. 

Since $\mathsf{T_{\!z}S}\subseteq\mathsf{T_{\!2}S}\subseteq\mathsf{T_{\!1}S}$, for every semigroup we have the implications:
$$
\xymatrix{
\mbox{absolutely $\mathsf{T_{\!1}S}$-closed}\ar@{=>}[r]\ar@{=>}[d]&\mbox{absolutely $\mathsf{T_{\!2}S}$-closed}\ar@{=>}[r]\ar@{=>}[d]&\mbox{absolutely $\mathsf{T_{\!z}S}$-closed}\ar@{=>}[d]\\
\mbox{injectively $\mathsf{T_{\!1}S}$-closed}\ar@{=>}[r]\ar@{=>}[d]&\mbox{injectively $\mathsf{T_{\!2}S}$-closed}\ar@{=>}[r]\ar@{=>}[d]&\mbox{injectively $\mathsf{T_{\!z}S}$-closed}\ar@{=>}[d]\\
\mbox{$\mathsf{T_{\!1}S}$-closed}\ar@{=>}[r]&\mbox{$\mathsf{T_{\!2}S}$-closed}\ar@{=>}[r]&\mbox{$\mathsf{T_{\!z}S}$-closed.}
}
$$

$\C$-Closed topological groups for various classes $\C$ were investigated by many authors~\cite{AC1,BL,Ban,BGR,DU,G,L}. In particular, the closedness of commutative topological groups in the class of Hausdorff topological semigroups was investigated in~\cite{Z1,Z2}; $\mathcal{C}$-closed topological semilattices were investigated in~\cite{BBm, BBc, GutikPagonRepovs2010, GutikRepovs2008, Stepp69, Stepp75}. Some notions of completeness in Category Theory were investigated in~\cite{CDT,Cbook1,Er,FG,G1,GH,LW}. In particular, closure operators in different categories were studied in~\cite{BGH, Cbook, CG1, CG2, DT, Dbook, GS, T, Za}.

In some cases the injective $\C$-closedness can be reduced to the $\C$-closedness and $\C$-discreteness.

\begin{definition*}
Let $\C$ be a class of topological semigroups.
A semigroup $X$ is called
\begin{itemize}
\item {\em $\C$-discrete} (or else {\em $\C$-nontopologizable}) if for any injective homomorphism $h:X\to Y$ to a topological semigroup $Y\in\C$ the image $h[X]$ is a discrete subspace of $Y$;
\item {\em $\C$-topologizable} if $X$ is not $\C$-discrete.
\end{itemize}
\end{definition*}

The study of topologizable and nontopologizable semigroups is a classical topic in Topological Algebra that traces its history back to Markov's problem \cite{Markov} of topologizability of infinite groups, which was resolved in \cite{Shelah}, \cite{Hesse} and \cite{Ol} by constructing examples of nontopologizable infinite groups. The topologizability of semigroups was investigated in \cite{BGP,BM,DS1,DS2,DT0,DT1,vD,DI,GLS,KOO,Kotov,Taimanov}.

The following theorem was proved by the authors in \cite[Proposition 3.2]{CCCS}.

\begin{theorem}[Banakh--Bardyla] Let $\C$ be a class of topological semigroups (such that $\C=\mathsf{T_{\!1}S}$). A semigroup $X$ is injectively $\C$-closed if (and only if) $X$ is $\C$-closed and $\C$-discrete.
\end{theorem}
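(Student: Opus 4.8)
The plan is to prove the two implications separately; the \emph{if} part works for every class $\C$, and only the \emph{only if} part uses the hypothesis $\C=\mathsf{T_{\!1}S}$. For the \emph{if} part, suppose $X$ (endowed with the discrete topology) is $\C$-closed and $\C$-discrete, and let $h\colon X\to Y$ be an injective continuous homomorphism into a topological semigroup $Y\in\C$. By $\C$-discreteness the subspace $h[X]$ of $Y$ is discrete, so $h\colon X\to h[X]$ is a continuous bijection between discrete spaces and hence a homeomorphism; thus $h$ is an isomorphic topological embedding, and $\C$-closedness of $X$ yields that $h[X]$ is closed in $Y$. Hence $X$ is injectively $\C$-closed.

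For the \emph{only if} part, let $\C=\mathsf{T_{\!1}S}$ and assume $X$ is injectively $\mathsf{T_{\!1}S}$-closed. Then $X$ is $\mathsf{T_{\!1}S}$-closed, since every isomorphic topological embedding is an injective continuous homomorphism; so it remains to prove that $X$ is $\mathsf{T_{\!1}S}$-discrete. Assume not: there is an injective homomorphism $h\colon X\to Y$ into a $T_1$ topological semigroup $Y$ such that the subspace $h[X]$ is not discrete. Fix a point $a\in h[X]$ that is not isolated in $h[X]$; then $a$ is not isolated in $Y$ either. The goal is to enlarge $Y$ to a $T_1$ topological semigroup in which $h[X]$ is no longer closed, which will contradict the injective $\mathsf{T_{\!1}S}$-closedness of $X$.

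I would obtain this enlargement by \emph{doubling the point $a$}. Put $Y_a=Y\sqcup\{a'\}$ with $a'$ a new point, let $q\colon Y_a\to Y$ be the retraction with $q(a')=a$ and $q|_Y=\mathrm{id}_Y$, and give $Y_a$ the topology generated by the open subsets of $Y$ together with the sets $(U\setminus\{a\})\cup\{a'\}$ for $U$ an open neighbourhood of $a$ in $Y$. Define a multiplication on $Y_a$ by $s\cdot t\defeq q(s)\,q(t)$ (the product being taken in $Y$): associativity is inherited from $Y$ because $q|_Y=\mathrm{id}_Y$, and this multiplication is continuous because it factors through the continuous map $q\times q\colon Y_a\times Y_a\to Y\times Y$ (the map $q$ is continuous by the choice of the subbase), the multiplication of $Y$, and the topological embedding $Y\hookrightarrow Y_a$. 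A routine verification shows that $Y_a$ is $T_1$ — the only delicate point is that $a$ and $a'$ cannot be separated by disjoint open sets, while all singletons remain closed — so $Y_a\in\mathsf{T_{\!1}S}$. Since $a$ is not isolated in $h[X]$, every basic neighbourhood $(U\setminus\{a\})\cup\{a'\}$ of $a'$ meets $h[X]\setminus\{a\}$, so $a'\in\overline{h[X]}\setminus h[X]$ and $h[X]$ is not closed in $Y_a$. Composing $h$ with the embedding $Y\hookrightarrow Y_a$ then produces the required injective continuous homomorphism from $X$ into a $T_1$ topological semigroup with non-closed image.

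I expect the crux to be discovering the point-doubling construction and recognizing that it is available only in $\mathsf{T_{\!1}S}$: the space $Y_a$ is $T_1$ but not Hausdorff, which is exactly why the equivalence is stated for $\C=\mathsf{T_{\!1}S}$ and cannot be run inside $\mathsf{T_{\!2}S}$ or $\mathsf{T_{\!z}S}$. The remaining work — verifying from the definitions that $Y_a$ is $T_1$ and that its multiplication is associative and continuous — is routine bookkeeping.
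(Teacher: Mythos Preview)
Your argument is correct. The paper does not prove this theorem at all: it merely cites it as \cite[Proposition~3.2]{CCCS}, so there is no in-paper proof to compare against. Your \emph{if} direction is the obvious one, and your \emph{only if} direction via doubling a non-isolated point of $h[X]$ is the natural construction; the verifications that $Y_a$ is a $T_1$ topological semigroup, that $Y\hookrightarrow Y_a$ is a continuous semigroup embedding, and that $a'$ lies in $\overline{h[X]}\setminus h[X]$ all go through as you outline. Your remark that the construction fails in $\mathsf{T_{\!2}S}$ (since $a$ and $a'$ cannot be separated) correctly identifies why the converse is stated only for $\C=\mathsf{T_{\!1}S}$.
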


This paper is a continuation of the papers \cite{BB,CCCS,BV} providing inner characterizations of various closedness properties of  semigroups. In order to formulate such inner characterizations, let us recall some properties of semigroups.

A semigroup $X$ is called
\begin{itemize}
\item {\em chain-finite} if any infinite set $I\subseteq X$ contains elements $x,y\in I$ such that $xy\notin\{x,y\}$;
\item {\em singular} if there exists an infinite set $A\subseteq X$ such that $AA$ is a singleton;
\item {\em periodic} if for every $x\in X$ there exists $n\in\IN$ such that $x^n$ is an idempotent;
\item {\em bounded} if there exists $n\in\IN$ such that for every $x\in X$ the $n$-th power $x^n$ is an idempotent;
\item {\em group-finite} if every subgroup of $X$ is finite;
\item {\em group-bounded} if every subgroup of $X$ is bounded.
\end{itemize}

The following theorem (proved in \cite{BB}) characterizes $\C$-closed commutative semigroups.

\begin{theorem}[Banakh--Bardyla]\label{t:C-closed} Let $\C$ be a class of topological semigroups such that $\mathsf{T_{\!z}S}\subseteq\C\subseteq \mathsf{T_{\!1}S}$. A commutative semigroup $X$ is $\C$-closed if and only if $X$ is chain-finite, nonsingular,  periodic, and group-bounded.
\end{theorem}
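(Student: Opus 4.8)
The plan is to prove the two implications separately, using the chain $\mathsf{T_{\!z}S}\subseteq\C\subseteq\mathsf{T_{\!1}S}$ to reduce each to an extremal case: for the necessity (``only if'') it suffices to deduce the four properties from $\mathsf{T_{\!z}S}$-closedness (the weakest hypothesis), and for the sufficiency (``if'') it suffices to show that a commutative semigroup with the four properties is $\mathsf{T_{\!1}S}$-closed (the strongest conclusion). Both statements then propagate to every intermediate class $\C$ by the implications in the displayed diagram.

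For necessity I would argue by contraposition: assuming $X$ commutative fails one of the four properties, I would build a $Y\in\mathsf{T_{\!z}S}$ containing $X$ as a non-closed discrete subsemigroup. The uniform device is $Y=X\sqcup\{\infty\}$ with the points of $X$ isolated, a neighbourhood base at $\infty$ consisting of the sets $\{\infty\}\cup F$ for $F$ in a suitable filter $\mathcal F$ on $X$, and an ad hoc rule for the products $\infty\cdot x$ and $\infty\cdot\infty$ (most often $\infty$ is declared a zero). Then $Y\in\mathsf{T_{\!z}S}$ and $X$ is non-closed in $Y$ provided: $\mathcal F$ is free (this gives the $T_1$/Hausdorff separation, hence --- as $X$ is discrete --- zero-dimensionality, and it gives $\infty\in\overline X$); every ``shift-preimage'' $\{x\in X:ax\in F\}$ lies in $\mathcal F$ (continuity at $(\infty,a)$); and for every $F\in\mathcal F$ there is $G\in\mathcal F$ with $GG\subseteq F$ (continuity at $(\infty,\infty)$). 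One then manufactures $\mathcal F$ from the failing property. If $X$ is not periodic, some $a$ has infinite order and the tails $\{a^{k}:k\ge n\}$ generate such a filter with $\infty$ a zero. If $X$ is not chain-finite it contains an infinite subsemilattice which is a chain (the relation $xy\in\{x,y\}$ forces idempotency), and after choosing this chain so that the associated ideal tails have empty intersection one adjoins its ``limit''. If $X$ is singular there is an infinite $A$ with $AA$ a singleton, and after passing to a suitable subset of $A$ one uses the cofinite filter of $A$ and defines $\infty\cdot x$, $\infty\cdot\infty$ by the finite-valued limits along $A$. Finally, if some subgroup is unbounded we may assume (the previous case being done) that $X$ is periodic, so this subgroup is torsion of unbounded exponent, and one realizes a carefully selected countable unbounded torsion subgroup --- of the form $\bigoplus_n C_n$ with $|C_n|\to\infty$, or a Prüfer group --- as a non-closed discrete subsemigroup.

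For sufficiency, let $X$ be commutative, chain-finite, nonsingular, periodic and group-bounded, let $X$ be a discrete subsemigroup of a $T_1$ topological semigroup $Y$, take $y\in\overline X$, and aim to prove $y\in X$. Periodicity together with continuity makes the powers of $y$ accumulate at an idempotent, and chain-finiteness controls the semilattice $E(X)$ of idempotents tightly enough to attach to $y$ a single idempotent $e\in X$ and to confine $y$ to the closure of the archimedean component of $e$. Since $X$ is periodic, that component is an ideal extension of the maximal subgroup $H_e$ by a nil semigroup; group-boundedness makes $H_e$ a bounded-exponent abelian group and nonsingularity tames the nil part, so it remains to check closedness of the ``atomic'' pieces and to glue. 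The central piece is that a bounded-exponent abelian group $G$ is $\mathsf{T_{\!1}S}$-closed: if $G$ has exponent $n$ and $y\in\overline G$, then $y^{n}=\lim x^{n}=e$, $ye=ey=y$, and $y$ commutes with $G$, so $\langle G\cup\{y\}\rangle$ is algebraically a group $G^{+}$; as $\overline G$ is a closed subsemigroup containing $G\cup\{y\}$ we get $G^{+}=\overline G$, hence $G$ is dense and discrete in the homogeneous space $G^{+}$, which forces $G^{+}$ discrete and therefore $G=G^{+}\ni y$. Combining this with the known fact that a chain-finite semilattice is $\mathsf{T_{\!1}S}$-closed and with the harmless behaviour of the nil parts, one assembles $y\in X$.

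I expect the main obstacle to be the necessity direction, and within it the chain-finite and the group-bounded cases. The naive filters --- tails of a subgroup, complements of finite subgroups, ideal tails of a descending chain of idempotents --- fail either to be free or to satisfy $GG\subseteq F$; in the group case the obstruction is structural, since $a+(-a)=0$ lies in every subgroup and thus in every member of a subgroup-generated filter, so the witnessing topological semigroup must come from a cleverly chosen countable test subsemigroup together with a non-obvious filter (and, for torsion groups, plausibly from a construction more elaborate than adjoining a single point). A secondary difficulty, on the sufficiency side, is the bookkeeping in passing from closedness of the semilattice-, group- and nil-``atoms'' to closedness of $X$, i.e. ruling out that an accumulation point straddles several archimedean components.
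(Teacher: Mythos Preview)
The paper does not contain a proof of this theorem: it is quoted verbatim from \cite{BB} (Banakh--Bardyla, \emph{Characterizing categorically closed commutative semigroups}, J.~Algebra \textbf{591} (2022), 84--110) and used as an input, so there is nothing in the present paper to compare your proposal against. Your reduction to the extremal classes $\mathsf{T_{\!z}S}$ and $\mathsf{T_{\!1}S}$ is exactly how the statement is organised in \cite{BB}, and the one-point extension $X\sqcup\{\infty\}$ with a free filter is indeed the device used there for the non-periodic and singular cases of necessity.

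Two places in your sketch, however, are genuine gaps rather than routine bookkeeping. On the necessity side, the group-bounded case cannot be handled by adjoining a single zero at infinity: as you yourself note, any subgroup-generated filter contains the identity, so the $GG\subseteq F$ condition fails structurally. The construction in \cite{BB} (mirrored in the present paper by Lemma~\ref{l:uniform} and the argument of Section~\ref{s:4}) instead embeds the unbounded subgroup into a compact group via Lemma~\ref{l:Baer}, extends this to a uniformity on $X$, and pulls back an element of infinite order from the completion; no single-point filter trick suffices. On the sufficiency side, your argument that a bounded abelian group $G$ is $\mathsf{T_{\!1}S}$-closed breaks at two points: in a merely $T_1$ topological semigroup the closure $\overline G$ need not be a subsemigroup, so you cannot assert $G^+\subseteq\overline G$; and even granting that $\langle G\cup\{y\}\rangle$ is algebraically a group with continuous translations, ``$G$ is a dense discrete \emph{subspace}'' does not force $G^+$ to be discrete --- you would need $G$ to be \emph{open} in $G^+$, which is not given. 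The proof in \cite{BB} avoids this by working with the ideal structure of $X$ and a more delicate analysis of accumulation points relative to the semilattice $E(X)$, rather than by a direct homogeneity argument on a single maximal subgroup.
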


Let us recall that a {\em congruence} on a semigroup $X$ is an equivalence relation $\approx$ on $X$ such that for any elements $x\approx y$ of $X$ and any $a\in X$ we have $ax\approx ay$ and $xa\approx ya$. For any congruence $\approx$ on a semigroup $X$, the quotient set $X/_\approx$ has a unique semigroup structure such that the quotient map $X\to X/_\approx$ is a semigroup homomorphism. The semigroup $X/_\approx$ is called the {\em quotient semigroup} of $X$ by the congruence $\approx$~.

A subset $I$ of a semigroup $X$ is called an {\em ideal} in $X$ if $IX\cup XI\subseteq  I$. Every ideal $I\subseteq X$ determines the congruence $(I\times I)\cup \{(x,y)\in X\times X:x=y\}$ on $X\times X$. The quotient semigroup of $X$ by this congruence is denoted by $X/I$ and called the {\em quotient semigroup} of $X$ by the ideal $I$. If $I=\emptyset$, then the quotient semigroup $X/\emptyset$ can be identified with the semigroup $X$.

 Theorem~\ref{t:C-closed} implies that each subsemigroup of a $\C$-closed commutative semigroup is $\C$-closed. On the other hand, quotient semigroups of $\C$-closed commutative semigroups are not necessarily $\C$-closed, see Example 1.8 in \cite{BB}. This motivates us to introduce the following notions:

\begin{definition*}A semigroup $X$ is called
\begin{itemize}
\item {\em projectively $\C$-closed} if for any congruence $\approx$ on $X$ the quotient semigroup $X/_{\approx}$ is $\C$-closed;
\item {\em ideally $\C$-closed} if for any ideal $I\subseteq X$ the quotient semigroup $X/I$ is $\C$-closed.
\end{itemize}
\end{definition*}

It is easy to see that for every semigroup the following implications hold:
$$\mbox{absolutely $\C$-closed $\Ra$ projectively $\C$-closed $\Ra$ ideally $\C$-closed $\Ra$ $\C$-closed.}$$

For a semigroup $X$, let $E(X)\defeq\{x\in X:xx=x\}$ be the set of idempotents of $X$.

For an idempotent $e$ of a semigroup $X$, let $H_e$ be the maximal subgroup of $X$ that contains $e$. The union $H(X)=\bigcup_{e\in E(X)}H_e$ of all subgroups of $X$ is called the {\em Clifford part} of $S$.
A semigroup $X$ is called
\begin{itemize}
\item {\em Clifford}  if $X=H(X)$;
\item {\em Clifford+finite} if $X\setminus H(X)$ is finite.
\end{itemize}

Ideally and projectively $\C$-closed commutative semigroups were characterized in  \cite{BB} as follows.

\begin{theorem}[Banakh--Bardyla]\label{t:mainP} Let $\C$ be a class of topological semigroups such that $\mathsf{T_{\!z}S}\subseteq\C\subseteq \mathsf{T_{\!1}S}$. For a commutative semigroup $X$ the following conditions are equivalent:
\begin{enumerate}
\item $X$ is projectively $\C$-closed;
\item $X$ is ideally $\C$-closed;
\item the semigroup $X$ is chain-finite, group-bounded and Clifford+finite.
\end{enumerate}
\end{theorem}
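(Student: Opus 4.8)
The implication $(1)\Rightarrow(2)$ is immediate, since every ideal $I\subseteq X$ is the class of a (Rees) congruence whose quotient is $X/I$; so the content is $(2)\Rightarrow(3)$ and $(3)\Rightarrow(1)$. Both rest on Theorem~\ref{t:C-closed} together with the semilattice decomposition of a periodic commutative semigroup $X$ into periodic archimedean components $X_\alpha$, each with a unique idempotent $e_\alpha$, kernel (minimal ideal) $K_\alpha=H_{e_\alpha}$, nil Rees quotient $X_\alpha/K_\alpha$, and $H(X)=\bigcup_\alpha K_\alpha$; moreover $E(X)=\{e_\alpha:\alpha\}$ with the index semilattice naturally isomorphic to $E(X)$, so when $X$ is chain-finite neither $E(X)$ nor the index semilattice has an infinite chain. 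Two facts about this decomposition are used repeatedly: $K_\alpha X_\beta\subseteq K_\alpha$ whenever $\beta\ge\alpha$, and $K_\alpha X_\beta\subseteq X_{\alpha\beta}$ with $\alpha\beta<\alpha$ whenever $\beta\not\ge\alpha$.

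\smallskip\noindent\emph{Proof of $(2)\Rightarrow(3)$.} Applying ideal $\C$-closedness to the empty ideal gives that $X$ is $\C$-closed, hence chain-finite, nonsingular, periodic and group-bounded by Theorem~\ref{t:C-closed}. Assume towards a contradiction that $X$ is not Clifford+finite, i.e.\ $X\setminus H(X)=\bigsqcup_\alpha(X_\alpha\setminus K_\alpha)$ is infinite. Then either (a) some component $X_\alpha$ has infinite nil part $X_\alpha\setminus K_\alpha$, or (b) infinitely many components satisfy $X_\alpha\neq K_\alpha$. In case (a), the two facts above show that $I:=K_\alpha\cup\bigsqcup_{\beta<\alpha}X_\beta$ is an ideal of $X$, and the image of the subsemigroup $X_\alpha$ in $X/I$ is the infinite commutative nil semigroup $X_\alpha/K_\alpha$. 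In case (b), chain-finiteness lets us thin the offending components to an infinite antichain $(\alpha_i)_{i\in\IN}$; in each $X_{\alpha_i}$ choose $x_i$ with $x_i\notin K_{\alpha_i}$ but $x_i^2\in K_{\alpha_i}$ (the largest power of any element of $X_{\alpha_i}\setminus K_{\alpha_i}$ still outside $K_{\alpha_i}$ works), and let $I$ be the ideal of $X$ generated by $\{x_ix_j:i,j\in\IN\}$. Comparing components, any factorisation $x_k=ux_ix_j$ ($u\in X^1$) lands in a component $\le\alpha_i$, forcing $\alpha_k=\alpha_i$, then $i=j$, and then $x_k\in K_{\alpha_i}$ (again via $K_{\alpha_i}X_\beta\subseteq K_{\alpha_i}$ for $\beta\ge\alpha_i$), a contradiction; so no $x_k$ lies in $I$, the $\bar x_i$ are distinct and nonzero in $X/I$, and $\{\bar x_i:i\in\IN\}\cup\{0\}$ is an infinite null subsemigroup. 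In either case $X/I$ contains an infinite commutative nil (resp.\ null) subsemigroup, which is singular or not chain-finite, so $X/I$ is not $\C$-closed by Theorem~\ref{t:C-closed} --- contradicting ideal $\C$-closedness. Hence $X$ is Clifford+finite.

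\smallskip\noindent\emph{Proof of $(3)\Rightarrow(1)$.} Fix a congruence $\approx$ on $X$ and let $\phi\colon X\to Q:=X/_{\approx}$ be the quotient map; we verify the four conditions of Theorem~\ref{t:C-closed} for $Q$. First, $X$ is periodic, since an infinite-order cyclic subsemigroup would be disjoint from $H(X)$ (a cyclic subsemigroup of a group is finite), contradicting Clifford+finiteness; hence $Q$ is periodic. Next $Q=\phi[H(X)]\cup\phi[X\setminus H(X)]$, where $\phi[H(X)]$ is a (commutative) Clifford semigroup and $\phi[X\setminus H(X)]$ is finite, so $Q$ is Clifford+finite, and the idempotents of $Q$ lying in $\phi[H(X)]$ are exactly $\phi[E(X)]$. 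For non-singularity: if $A\subseteq Q$ is infinite with $AA=\{s\}$, discard the finitely many points of $A$ outside $\phi[H(X)]$; each remaining $a$ has $a^2=s$, which lies in the maximal subgroup of $Q$ containing $a$, so all of $A$ lies in a single maximal subgroup $H$ of $Q$, whence $aA=\{s\}$ gives $|A|=1$, a contradiction. For chain-finiteness: an infinite chain (totally ordered subsemilattice) of $Q$ lies, after discarding finitely many points, inside the semilattice $\phi[E(X)]$, so it suffices to show a homomorphic image of a chain-finite semilattice is chain-finite. A descending infinite chain $\bar c_0>\bar c_1>\cdots$ in the image lifts immediately, taking $d_i$ to be the product of chosen preimages of $\bar c_0,\dots,\bar c_i$; from an ascending infinite chain $\bar c_0<\bar c_1<\cdots$ one forms, over chosen preimages $x_n$, the partial products $x_m\cdots x_n$, which are descending in $n$ and hence either give an infinite descending chain or stabilise to elements $w_m$ with $w_1<w_2<\cdots$ an infinite ascending chain in the domain --- either way contradicting chain-finiteness of the domain.

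\smallskip It remains to prove $Q$ is group-bounded. Fix a subgroup $G\le Q$ and put $N:=\phi^{-1}[G]$, a subsemigroup of $X$ inheriting commutativity, periodicity, chain-finiteness, group-boundedness and (by a short argument using $\phi(e)=$ identity of $G$ for idempotents $e\in N$) Clifford+finiteness, with $\phi[N]=G$. Then $G_0:=\phi[H(N)]$ is a subsemigroup of the abelian group $G$ that is a union of its own subgroups, hence a subgroup of $G$, and $G\setminus G_0\subseteq\phi[N\setminus H(N)]$ is finite, so $G_0$ has finite index in $G$. Since $E(N)$ is a chain-finite meet-semilattice it has a least element $\bot$ (otherwise a descending chain $x_1>x_2>\cdots$ with $x_{i+1}=x_iy_i<x_i$ would arise); and as $e\bot=\bot$ and $\phi(e)=\phi(\bot)$ for all $e\in E(N)$, one gets $\phi[H_e]\subseteq\phi[H_\bot]$ for the maximal subgroups of $N$ at $e$ and $\bot$, so $G_0=\phi[H(N)]=\phi[H_\bot]$. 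Being a homomorphic image of the bounded group $H_\bot$, $G_0$ is bounded, hence so is its finite-index overgroup $G$. Thus $Q$ satisfies all four conditions of Theorem~\ref{t:C-closed} and is $\C$-closed, so $X$ is projectively $\C$-closed. The steps that demand the most care are the two ideal constructions in $(2)\Rightarrow(3)$ --- checking that the chosen elements do not collapse to zero in the quotient --- and the ascending-chain lifting used for chain-finiteness in $(3)\Rightarrow(1)$.
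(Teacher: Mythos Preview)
The paper does not prove Theorem~\ref{t:mainP}; it is quoted from \cite{BB}. So there is no in-paper proof to compare against, and your attempt must stand on its own. Your argument for $(3)\Rightarrow(1)$ is sound: the periodicity, nonsingularity, and group-boundedness checks are correct, and your chain-lifting argument for chain-finiteness of the quotient semilattice is fine.

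The gap is in case~(a) of $(2)\Rightarrow(3)$. You assert that the infinite commutative nil semigroup $X_\alpha/K_\alpha$ sitting inside $X/I$ is ``singular or not chain-finite''. Every nil semigroup is automatically chain-finite (if $x,y\ne 0$ and $xy\in\{x,y\}$, say $xy=x$, then $x=xy^n=0$), so your claim reduces to ``every infinite commutative nil semigroup is singular''. This is false. Take
\[
N=\{0,c,d,e\}\cup\{x_i:i\in\IN\},\qquad x_ix_j=c\ (i\ne j),\quad x_i^2=d,\quad cx_i=dx_i=e,
\]
with all remaining products (anything involving $e$, or any product of $c,d$ with each other) equal to $0$. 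One checks associativity directly (or observes that $N$ is the quotient of the free commutative semigroup on $\{x_i\}$ by the congruence identifying all degree-$2$ squarefree monomials, all degree-$2$ squares, all degree-$3$ monomials, and sending degree $\ge 4$ to $0$). This $N$ is commutative, nil, and infinite; it is chain-finite; and it is \emph{nonsingular}, because any infinite $A\subseteq N$ contains at least two distinct $x_i$'s, whence $AA\supseteq\{c,d\}$ with $c\ne d$. So the mere presence of an infinite nil subsemigroup in $X/I$ does not contradict $\C$-closedness of $X/I$.

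The theorem is still true, but case~(a) needs a further use of the \emph{ideal}-$\C$-closedness hypothesis (one must pass to a further quotient by a larger ideal to produce an infinite null --- not merely nil --- configuration). In the example above, the ideal $\{0,c,d,e\}$ does the job; in general this is the nontrivial content of what the present paper cites as \cite[Lemma~7.5]{BB} (appearing here as Lemma~\ref{l:root2}), together with a companion finiteness lemma analogous to Lemma~\ref{l:root1}. Your case~(b) is correct as written; case~(a) needs to be redone along these lines.
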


For a semigroup $X$ let
$$Z(X)\defeq\{z\in X:\forall x\in X\;\;(xz=zx)\}$$be the {\em center} of $X$, and
$$I\!Z(X)\defeq\{z\in Z(X):zX\subseteq Z(X)\}$$be the {\em ideal center} of $X$. Observe that the ideal center is the largest ideal in $X$, which is contained in $Z(X)$.

The following theorem is proved in Lemmas 5.1, 5.3, 5.4 of \cite{BB}.

\begin{theorem}[Banakh--Bardyla]\label{t:center} If a semigroup $X$ is  $\mathsf{T_{\!z}S}$-closed, then its center $Z(X)$ is chain-finite, periodic and nonsingular.
\end{theorem}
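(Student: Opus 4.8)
We prove each of the three assertions in contrapositive form: assuming that $Z(X)$ fails to be chain-finite, fails to be periodic, or is singular, we produce a Tychonoff zero-dimensional topological semigroup $Y$ containing $X$ (with the discrete topology) as a dense, non-closed subsemigroup, thereby contradicting $\mathsf{T_{\!z}S}$-closedness of $X$. In all three cases $Y$ is obtained by adjoining to the discrete space $X$ a suitable set $\Lambda$ of new "points at infinity" that serve as limits of certain sequences lying in $X$; the whole point of working with the center is that the offending infinite set $B\subseteq Z(X)$ commutes with everything, which is precisely what lets the multiplication — forced on $\Lambda$ and on the products $B\cdot X$ — be propagated to a continuous associative operation on all of $Y$.

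Consider periodicity first. If $z\in Z(X)$ is not periodic, then $C=\{z^n:n\in\IN\}$ is infinite and $n\mapsto z^n$ is injective, since a finite monogenic semigroup contains an idempotent. For each $x\in X$ the sequence $(z^nx)_n$ is either eventually constant, with a genuine limit $e_x\in X$, or not, in which case we introduce a new symbol $\infty_x$, declaring $\infty_x=\infty_{x'}$ exactly when $(z^nx)_n$ and $(z^nx')_n$ are eventually equal. Let $Y=X\cup\Lambda$ with $\Lambda=\{\infty_x:(z^nx)_n$ not eventually constant$\}$, keep $X$ discrete, and take as basic neighbourhoods of $\infty_x\in\Lambda$ the sets consisting of $\infty_x$ together with all sufficiently late terms of the orbits $(z^n y)_n$ whose limit is $\infty_x$. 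Extend multiplication by putting, for $p=\lim_n z^nx$ in $Y$ and any $y\in X$, $p\cdot y:=\lim_n z^n(xy)$ and $y\cdot p:=\lim_n z^n(yx)$, and $p\cdot q:=\lim_n z^n(xy)$ for $q=\lim_n z^ny$; since $z$ is central, $z^n(xy)=x(z^ny)$, which makes these limits independent of the chosen representatives and the operation associative. As $z^n\to\infty_z\in\Lambda$ and $C\subseteq X$, the subsemigroup $X$ is dense and not closed in $Y$, so it remains only to check that $Y$ is Hausdorff, zero-dimensional and a topological semigroup.

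For the other two properties one first extracts the relevant degenerate infinite central set. If $Z(X)$ is not chain-finite there is an infinite $I\subseteq Z(X)$ with $xy\in\{x,y\}$ for all $x,y\in I$; taking $x=y$ shows every element of $I$ is idempotent, so $I$ is an infinite linearly ordered semilattice and hence contains a strictly monotone $\omega$-sequence $(e_n)_{n\in\IN}$. If $Z(X)$ is singular there is an infinite $A\subseteq Z(X)$ with $AA=\{c\}$ a single point, whence $a^2=c$ for all $a\in A$ and, by centrality, $ac=ca=a'c$ for all $a,a'\in A$. In either case one repeats the scheme of the previous paragraph verbatim: adjoin limit points for the sequences $(e_nx)_n$ (respectively $(ax)_{a\in A}$), which by centrality coincide with $(xe_n)_n$ (respectively $(xa)_{a\in A}$), the identities $e_ie_j=e_{\min(i,j)}$ (respectively $a^2=c$ and $ac=a'c$) keeping the products of these sequences tightly controlled; extend the multiplication of $X$ by taking limits through these sequences; and obtain a Tychonoff zero-dimensional topological semigroup in which $X$ sits as a dense proper subsemigroup (for the chain case one adds a supremum or an infimum of $(e_n)_n$ according to the direction of monotonicity). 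These are the natural analogues of the extensions used in \cite{BB} to characterize $\C$-closed commutative semigroups, the new ingredient being exactly the step in which centrality is used to push the partially defined multiplication from the central subsemigroup out to all of $X$.

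The main obstacle is the verification that each $Y$ really is a topological semigroup: one must check associativity of the extended operation and, above all, the joint continuity of multiplication at pairs involving the adjoined points (of the form $(\infty_x,y)$, $(y,\infty_x)$ and $(\infty_x,\infty_y)$), and one must check that $Y$ is Hausdorff — which is exactly what dictates the recipe for deciding when an orbit has a real limit in $X$ and when a new point must be added — and zero-dimensional, the latter being easy since the singletons of $X$ and the described neighbourhoods of the points of $\Lambda$ are clopen. Each of these rests on centrality: it is the identity $z^n(xy)=x(z^ny)$ and its analogues $e_n(xy)=x(e_ny)$, $a(xy)=x(ay)$ that make the sequences whose limits we adjoin compatible with multiplication on both sides. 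A secondary technical point, which complicates the bookkeeping, is dealing with elements $x\in X$ whose orbit $(z^nx)_n$ is eventually periodic of period $\ge2$ (and with the analogous finite-but-nonconvergent product sets in the other two cases): such $x$ must either be accommodated by adjoining a finite cycle of limit points, or else one argues that its existence already forces, inside $Z(X)$, a configuration violating one of the conclusions being established.
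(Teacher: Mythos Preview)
The paper does not prove this theorem in the text; it simply cites Lemmas~5.1, 5.3 and 5.4 of \cite{BB}. Your overall strategy---for each of the three properties, assume it fails on $Z(X)$ and construct a Tychonoff zero-dimensional topological semigroup $Y\supseteq X$ in which the discrete copy of $X$ is dense but not closed---is indeed the strategy of \cite{BB}, and your emphasis on centrality as the device that lets one push the partially defined multiplication from the offending central subset out to all of $X$ is exactly right.

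However, what you have written is, by your own description, a sketch rather than a proof: the last two paragraphs list the verifications (associativity of the extended operation, joint continuity at pairs involving the new points, Hausdorffness) without carrying any of them out, and you flag the eventually-periodic-orbit phenomenon as an unresolved ``secondary technical point.'' That point is a genuine obstruction to the specific construction you propose in the non-periodic case. Concretely, take $X=\IN\sqcup G$ with $G=\mathbb Z/p$, multiplication $n\cdot m=n+m$, $n\cdot g=g\cdot n=g+(n\bmod p)$, $g\cdot h=g+h$; then $X$ is commutative, $z=1\in\IN\subseteq Z(X)$ is non-periodic, yet for every $g\in G$ the orbit $(z^ng)_n$ is periodic of period $p$. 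In your space $z^n\to\infty_z$, while $z^ng$ has no limit, so right multiplication by $g$ cannot be continuous at $\infty_z$. Your first proposed fix (adjoin a finite cycle of limit points) handles this particular example but breaks down once the periods coming from different $x$ are unbounded, which nothing rules out; your second proposed fix is not spelled out at all. The chain-finite and singular cases are treated even more telegraphically and face analogous difficulties (e.g.\ in the singular case the set $\{ax:a\in A\}$ may be finite of size $\ge 2$ for some $x\in X$, so the net $(ax)_{a\in A}$ need not converge).

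So the outline points in the correct direction and isolates the correct mechanism, but it is not yet a proof; the constructions in \cite{BB} are arranged to absorb precisely these difficulties, and you would need either to reproduce those constructions or to supply an alternative device before the argument is complete.
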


Theorems~\ref{t:C-closed} and \ref{t:center} suggest the following problem.

\begin{problem}\label{prob:main1} Is the center $Z(X)$ of any $\mathsf{T_{\!z}S}$-closed semigroup $\mathsf{T_{\!z}S}$-closed?
\end{problem}

By Theorems~\ref{t:C-closed} and \ref{t:center},  Problem~\ref{prob:main1} is equivalent to

\begin{problem}\label{prob:main2} Let $X$ be a $\mathsf{T_{\!z}S}$-closed semigroup. Is any subgroup of $Z(X)$ bounded?
\end{problem}

In this paper we shall give partial affirmative answers to Problems~\ref{prob:main1} and \ref{prob:main2}. To formulate them, we need to recall some information on viable idempotents and viable semigroups.

Following Putcha and Weissglass \cite{PW} we define a semigroup $X$ to be {\em viable} if for any $x,y\in X$ with  $\{xy,yx\}\subseteq E(X)$ we have $xy=yx$. This notion can be localized using the notion of a viable idempotent.

An idempotent $e$ in a semigroup $X$ is defined to be {\em viable} if the set
$$\tfrac{H_e}e\defeq\{x\in X:xe=ex\in H_e\}$$
is a {\em coideal} in $X$ in the sense that $X\setminus\frac{H_e}e$ is an ideal in $X$. By $V\!E(X)$ we denote the set of viable idempotents of a semigroup $X$.

By Theorem~3.2 of \cite{BanE}, a semigroup $X$ is viable if and only if each idempotent of $X$ is viable if and only if for every $x,y\in X$ with $xy=e\in E(X)$ we have $xe=ex$ and $ye=ey$. This characterization implies that every semigroup $X$ with $E(X)\subseteq Z(X)$ is viable. In particular, every commutative semigroup is viable.

The main result of this paper is the following theorem describing properties of subgroups of categorically closed semigroups and providing partial affirmative answers to Problem~\ref{prob:main2}.

\begin{theorem}\label{t:main}
Let $X$ be a semigroup and $\mathsf{i}\in\{\mathsf{1,2,z}\}$.
\begin{enumerate}
\item If $X$ is $\mathsf{T_{\!z}S}$-discrete, then $Z(X)$ is group-finite.
\item If $X$ is injectively $\mathsf{T_{\!z}S}$-closed, then $Z(X)$ is group-finite.
\item If $X$ is ideally $\mathsf{T_{\!z}S}$-closed, then $Z(X)$ is group-bounded.
\item If $X$ is $\mathsf{T_{\!z}S}$-closed, then $I\!Z(X)$ is group-bounded.
\item If $X$ is ideally $\mathsf{T_{\!i}S}$-closed, then for every viable idempotent $e\in V\!E(X)$ the maximal subgroup $H_e$ of $X$ is projectively $\mathsf{T_{\!i}S}$-closed and has bounded center $Z(H_e)$.
\item If $X$ is absolutely $\mathsf{T_{\!i}S}$-closed, then for every viable idempotent $e\in V\!E(X)$ the maximal subgroup $H_e$ of $X$ is
absolutely $\mathsf{T_{\!i}S}$-closed and has finite center $Z(H_e)$.
\item If $X$ is ideally $\mathsf{T_{\!z}S}$-closed, then the set $Z(X)\cap\korin{\IN}{V\!E(X)}\setminus H(X)$ is finite.
\end{enumerate}
\end{theorem}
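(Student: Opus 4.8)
\smallskip
\noindent\emph{Sketch of the intended proof.} The plan is to reduce the assertion to a non\-singularity statement about ideal quotients of $X$, using the commutative theory of Theorems~\ref{t:C-closed} and~\ref{t:mainP} together with the coideal attached to a viable idempotent. Since $X$ is ideally $\mathsf{T_{\!z}S}$-closed it is in particular $\mathsf{T_{\!z}S}$-closed, so by Theorem~\ref{t:center} the center $S:=Z(X)$ is chain-finite, periodic and nonsingular, and by Theorem~\ref{t:main}(3) it is group-bounded; hence by Theorem~\ref{t:C-closed}, $S$ is a $\mathsf{T_{\!z}S}$-closed commutative semigroup. Every $z\in S\cap\korin{\IN}{V\!E(X)}$ has a unique idempotent power $e_z=z^n\in V\!E(X)\cap S$, and $z\in H(X)$ iff $ze_z=z$ iff $z\in H_{e_z}\cap S$; so, writing $N_e:=\{z\in S:\exists n\in\IN\ z^n=e\}$ and $G_e:=H_e\cap S$ for $e\in V\!E(X)\cap S$, one has $S\cap\korin{\IN}{V\!E(X)}\setminus H(X)=\bigsqcup_{e}(N_e\setminus G_e)$, where $N_e$ is a commutative subsemigroup of $S$ with bounded group kernel $G_e=eN_e$, and $N_e\setminus G_e$ is in bijection with the set of nonzero elements of the commutative nil semigroup $N_e/G_e$. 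Moreover $N_e\subseteq\tfrac{H_e}e$ and, $e$ being viable, $X\setminus\tfrac{H_e}e$ is an ideal of $X$ and $x\mapsto xe$ defines a homomorphism $\pi_e\colon X\to H_e\cup\{0\}$ which is the identity on $H_e$ and kills $X\setminus\tfrac{H_e}e$.

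The engine of the argument is the following observation: for any $D\subseteq S$, writing $\widehat D$ for the ideal of $X$ generated by $D$ and $I_D:=\widehat D\cap S$, the restriction to $S$ of the quotient homomorphism $X\to X/\widehat D$ has kernel $I_D$ and takes values in $Z(X/\widehat D)$; since $X/\widehat D$ is again ideally $\mathsf{T_{\!z}S}$-closed, $Z(X/\widehat D)$ is a $\mathsf{T_{\!z}S}$-closed commutative semigroup (Theorem~\ref{t:center} and Theorem~\ref{t:main}(3)), and hence so is its subsemigroup $S/I_D$; in particular $S/I_D$ is chain-finite and nonsingular. One then argues by contradiction, assuming $\bigsqcup_e(N_e\setminus G_e)$ infinite. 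If some $N_e/G_e$ is singular, say an infinite $\bar A\subseteq N_e/G_e$ has $\bar A\bar A=\{\bar c\}$: when $\bar c\neq\bar 0$ this lifts to an infinite $A\subseteq N_e\setminus G_e$ with $AA$ a singleton, contradicting nonsingularity of $S$; when $\bar c=\bar 0$ it lifts to an infinite $Z_0\subseteq N_e\setminus G_e$ with $Z_0Z_0\subseteq G_e$, and taking $D:=Z_0Z_0$ the images of $Z_0$ in $S/I_D$ form an infinite \emph{null} set (if $\zeta\in Z_0$ were absorbed into $\widehat D=Z_0Z_0\cup X\!\cdot\!Z_0Z_0$, writing $\zeta=xw$ with $w\in Z_0Z_0\subseteq G_e$ invertible in $H_e$ gives $xe=\zeta w^{-1}\in N_e$ with $(xe)e=xe$, whence $xe\in G_e$ and then $\zeta=(xe)w\in G_e$, a contradiction), contradicting nonsingularity of $S/I_D$. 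If instead infinitely many $N_e$ are nonempty, their (viable, central) idempotents form an infinite subset of the chain-finite semilattice $E(S)$, hence contain an infinite antichain $F_0$; choosing $z_e\in N_e\setminus G_e$ with $z_e^2\in G_e$ for $e\in F_0$ and $D:=\{z_ez_{e'}:e\neq e'\in F_0\}\cup\{z_e^2:e\in F_0\}$, the antichain condition applied to idempotent powers, together with the same $\pi_e$-computation, shows that the images of $\{z_e:e\in F_0\}$ in $S/I_D$ are pairwise distinct and form an infinite null set, once more contradicting nonsingularity of $S/I_D$.

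The case I expect to be the real obstacle is the one left over: some $N_e/G_e$ is infinite \emph{and} nonsingular. Then $N_e/G_e$ is, by itself, a $\mathsf{T_{\!z}S}$-closed commutative (nil) semigroup, so no cheap null set is available; it is, however, not Clifford+finite, hence by Theorem~\ref{t:mainP} not ideally $\mathsf{T_{\!z}S}$-closed, so some ideal quotient of $N_e$ fails to be $\mathsf{T_{\!z}S}$-closed. The hard part is to realize such a non-$\mathsf{T_{\!z}S}$-closed ideal quotient of $N_e$ in the form $S/I_D$ for a suitable $D\subseteq S$ --- equivalently, to produce an ideal of $N_e$ that both witnesses non-closedness and is saturated under the partial multiplicative action of $S$ on $N_e$. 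This is precisely where viability of $e$ must enter: the coideal $\tfrac{H_e}e$ and the homomorphism $\pi_e\colon X\to H_e\cup\{0\}$ sharply restrict which elements of $S$ can carry $N_e$ back into $N_e$, and I expect that exploiting this, together with the periodicity and group-boundedness of $S$ and careful bookkeeping of nilpotency indices, yields the required saturated witness and finishes the proof.
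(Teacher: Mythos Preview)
Your sketch addresses only statement~(7), taking (1)--(6) as established; this matches the paper, which proves~(7) separately after the others. Your decomposition $Z(X)\cap\korin{\IN}{V\!E(X)}\setminus H(X)=\bigsqcup_e(N_e\setminus G_e)$ and the split into ``infinitely many $e$ with $N_e\setminus G_e\ne\emptyset$'' versus ``some single $N_e\setminus G_e$ infinite'' mirror the paper's Lemmas~\ref{l:root1} and~\ref{l:root2}. For the first case your ideal $\widehat D$ generated by the products $z_ez_{e'}$ is essentially the paper's construction: the paper builds a more explicit ideal $J=\bigcup_{e\in C}J_e$ from the coideals $\tfrac{H_c}c$, but the contradiction reached---singularity of the center of an ideal quotient of $X$---is the same, and your verification that $z_e\notin\widehat D$ via $\pi_e$ is exactly the content of the paper's Claims~\ref{cl:AJ} and~\ref{cl:AAJ}.

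The genuine gap is your ``leftover'' case, a single infinite nonsingular $N_e/G_e$, which you yourself flag as the obstacle. The plan you outline is not just incomplete but self-defeating as phrased: your own engine shows that $S/I_D$ embeds in $Z(X/\widehat D)$, and since $X/\widehat D$ is again ideally $\mathsf{T_{\!z}S}$-closed, Theorems~\ref{t:C-closed}, \ref{t:center} and part~(3) force $Z(X/\widehat D)$---and hence its subsemigroup $S/I_D$---to be $\mathsf{T_{\!z}S}$-closed. So you cannot hope to exhibit $S/I_D$ as a \emph{non}-$\mathsf{T_{\!z}S}$-closed quotient. The right target is \emph{singularity}: since $N_e/G_e$ is nil, any non-$\mathsf{T_{\!z}S}$-closed ideal quotient of it is automatically chain-finite, periodic and group-bounded, hence must be singular by Theorem~\ref{t:C-closed}; this yields an infinite $A\subseteq N_e$ with $AA\subseteq K'$ for some ideal $K'\supseteq G_e$ of $N_e$, and the question becomes whether $A\cap\widehat{K'}=\emptyset$. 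That saturation step---controlling $XK'\cap N_e$---is exactly where your sketch stops, and your earlier trick (inverting an element of $G_e$) no longer applies once $K'\setminus G_e$ is nonempty. The paper does not attempt this route at all: it invokes \cite[Lemma~7.5]{BB}, restated here as Lemma~\ref{l:root2}, which directly proves that for any $e\in E(X)\cap Z(X)$ with $H_e\cap Z(X)$ bounded, the set $(\korin{\IN}{H_e}\cap Z(X))\setminus H_e$ is finite. You should cite that lemma (or reproduce its argument); the saturation approach, as stated, does not close the gap.
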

In the last statement of Theorem~\ref{t:main}, $\korin{\IN}{V\!E(X)}$ stands for the set $\{x\in X:\exists n\in\IN\;\;x^n\in V\!E(X)\}$.

Theorem~\ref{t:main}, combined with Theorems~\ref{t:C-closed} and \ref{t:center}, implies the following two partial affirmative answers to Problem~\ref{prob:main1}.

\begin{theorem}
If a semigroup $X$ is $\mathsf{T_{\!z}S}$-closed, then its ideal center $I\!Z(X)$ is $\mathsf{T_{\!1}S}$-closed.
\end{theorem}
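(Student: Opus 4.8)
The plan is to obtain this theorem as a corollary of Theorems~\ref{t:center}, \ref{t:C-closed}, and \ref{t:main}(4), by verifying that the abstract semigroup $I\!Z(X)$ meets all four conditions in the characterization of $\C$-closed commutative semigroups. (If $I\!Z(X)=\emptyset$ the statement is trivial, so one may assume $I\!Z(X)\neq\emptyset$.)

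First I would record that $I\!Z(X)$ is a commutative semigroup contained in $Z(X)$: being the largest ideal of $X$ lying in $Z(X)$, it is in particular a subsemigroup of $Z(X)$, and any two elements of $Z(X)$ commute, since each of them commutes with every element of $X$ and hence with the other.

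Next I would transfer the three ``internal'' finiteness properties from $Z(X)$ down to $I\!Z(X)$. Since $X$ is $\mathsf{T_{\!z}S}$-closed, Theorem~\ref{t:center} gives that $Z(X)$ is chain-finite, periodic and nonsingular. Chain-finiteness and nonsingularity are properties of \emph{subsets} (they assert the existence, resp.\ non-existence, of a certain kind of infinite subset), so they descend to the subset $I\!Z(X)\subseteq Z(X)$: every infinite $I\subseteq I\!Z(X)$ is an infinite subset of $Z(X)$ and hence contains elements $x,y$ with $xy\notin\{x,y\}$; and an infinite $A\subseteq I\!Z(X)$ with $AA$ a singleton would be such a subset of $Z(X)$, which is impossible. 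Periodicity is inherited by every subsemigroup, so $I\!Z(X)$ is periodic as well. Finally, Theorem~\ref{t:main}(4) asserts precisely that $I\!Z(X)$ is group-bounded.

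Assembling these facts, $I\!Z(X)$ is a commutative semigroup that is chain-finite, nonsingular, periodic and group-bounded, so Theorem~\ref{t:C-closed}, applied with $\C=\mathsf{T_{\!1}S}$ (which satisfies $\mathsf{T_{\!z}S}\subseteq\C\subseteq\mathsf{T_{\!1}S}$), yields that $I\!Z(X)$ is $\mathsf{T_{\!1}S}$-closed. The substantive input here is the group-boundedness of the ideal center supplied by Theorem~\ref{t:main}(4), so there is no genuine obstacle; the only point needing a moment's care is the passage from $Z(X)$ to its subsemigroup $I\!Z(X)$, and this is immediate once one observes that chain-finiteness and nonsingularity are ``subset'' rather than ``subsemigroup'' properties.
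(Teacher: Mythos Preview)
Your proposal is correct and follows exactly the approach the paper indicates: the theorem is stated as an immediate consequence of Theorem~\ref{t:main}(4) combined with Theorems~\ref{t:C-closed} and~\ref{t:center}, and you have merely spelled out the routine verification that chain-finiteness, nonsingularity and periodicity pass from $Z(X)$ to its subsemigroup $I\!Z(X)$.
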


\begin{theorem}\label{c:C-closed} If a semigroup $X$ is $\mathsf{T_{\!z}S}$-discrete or injectively $\mathsf{T_{\!z}S}$-closed or ideally $\mathsf{T_{\!z}S}$-closed, then its center $Z(X)$ is $\mathsf{T_{\!1}S}$-closed.
\end{theorem}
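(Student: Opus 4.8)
The plan is to reduce the statement to the external characterization of $\mathsf{T_{\!1}S}$-closed commutative semigroups supplied by Theorem~\ref{t:C-closed}. Since any two elements of $Z(X)$ commute with one another, $Z(X)$ is a commutative semigroup, so by Theorem~\ref{t:C-closed} applied to the class $\C=\mathsf{T_{\!1}S}$ (which does satisfy $\mathsf{T_{\!z}S}\subseteq\C\subseteq\mathsf{T_{\!1}S}$) the semigroup $Z(X)$ is $\mathsf{T_{\!1}S}$-closed if and only if it is chain-finite, nonsingular, periodic, and group-bounded. Hence it suffices to derive these four properties of $Z(X)$ from each of the three hypotheses on $X$.

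Group-boundedness is read off directly from Theorem~\ref{t:main}: if $X$ is ideally $\mathsf{T_{\!z}S}$-closed, then part~(3) gives it; and if $X$ is $\mathsf{T_{\!z}S}$-discrete or injectively $\mathsf{T_{\!z}S}$-closed, then parts~(1) and~(2) give the stronger conclusion that $Z(X)$ is group-finite, which implies group-bounded because in a finite group $G$ the power $x^{|G|}$ equals the identity, an idempotent, for every $x\in G$. For chain-finiteness, nonsingularity, and periodicity of $Z(X)$ I would first handle the two hypotheses that already force $X$ to be $\mathsf{T_{\!z}S}$-closed: if $X$ is injectively $\mathsf{T_{\!z}S}$-closed or ideally $\mathsf{T_{\!z}S}$-closed, then $X$ is $\mathsf{T_{\!z}S}$-closed by the implication diagrams in the introduction, so Theorem~\ref{t:center} yields these three properties simultaneously, and Theorem~\ref{t:C-closed} then finishes these cases.

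The remaining case --- that of a $\mathsf{T_{\!z}S}$-discrete semigroup $X$ --- does not pass through Theorem~\ref{t:center}, since $\mathsf{T_{\!z}S}$-discreteness does not by itself imply $\mathsf{T_{\!z}S}$-closedness; this is where I expect the real work to be. The task is to show directly that the center of any $\mathsf{T_{\!z}S}$-discrete semigroup is chain-finite, periodic, and nonsingular. The natural approach is to rework the three constructions underlying Theorem~\ref{t:center} (Lemmas~5.1, 5.3, and~5.4 of~\cite{BB}): in each of them a failure of the relevant property of $Z(X)$ is exploited to embed $X$ as a non-closed subsemigroup into a Tychonoff zero-dimensional topological semigroup obtained from $X$ by adjoining a limit point --- of a long chain in $Z(X)$, of a non-idempotent power of a central element, or of an infinite set $A\subseteq Z(X)$ with $AA$ a singleton, respectively. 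For the present purpose one must verify, in each of these constructions, that the image of $X$ in the ambient semigroup can be arranged to be non-discrete, so that the embedding already contradicts the $\mathsf{T_{\!z}S}$-discreteness of $X$; upgrading the conclusion from non-closedness (as used in~\cite{BB}) to non-discreteness is the main obstacle I foresee. Once this is in place, $Z(X)$ has all four properties required by Theorem~\ref{t:C-closed}, and that theorem again gives that $Z(X)$ is $\mathsf{T_{\!1}S}$-closed.
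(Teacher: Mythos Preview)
For the injectively $\mathsf{T_{\!z}S}$-closed and ideally $\mathsf{T_{\!z}S}$-closed hypotheses your argument coincides with the paper's: each implies $\mathsf{T_{\!z}S}$-closedness, so Theorem~\ref{t:center} supplies chain-finiteness, periodicity and nonsingularity of $Z(X)$, Theorem~\ref{t:main}(2),(3) supplies group-boundedness, and Theorem~\ref{t:C-closed} finishes.

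For the $\mathsf{T_{\!z}S}$-discrete hypothesis you have put your finger on a genuine subtlety that the paper's one-line deduction glosses over. Theorem~\ref{t:center} is stated only for $\mathsf{T_{\!z}S}$-closed semigroups, and $\mathsf{T_{\!z}S}$-discreteness does not imply $\mathsf{T_{\!z}S}$-closedness, so the paper must be tacitly invoking the \emph{proofs} of Lemmas~5.1, 5.3, 5.4 in \cite{BB} rather than the bare statement of Theorem~\ref{t:center}. Your plan to revisit those constructions is therefore exactly the right one. Be aware, however, that the obstacle you anticipate is real: the standard way those lemmas witness non-$\mathsf{T_{\!z}S}$-closedness is to adjoin an external limit point to $X$, and in such an enlargement $X$ itself stays discrete, so the same construction does \emph{not} automatically contradict $\mathsf{T_{\!z}S}$-discreteness. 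To complete this case you must either verify that the specific arguments in \cite{BB} already produce a non-discrete $\mathsf{T_{\!z}S}$ semigroup topology on $X$ itself (in the spirit of the uniformity construction of Lemma~\ref{l:uniform} used here for the group-bounded property), or devise new constructions that do so. The paper does not spell this step out either.
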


Theorems~\ref{t:mainP}, \ref{t:main}(3) and \ref{c:C-closed} motivate the following ``ideal'' version of Problem~\ref{prob:main1}.

\begin{problem}\label{prob:iC1} Is the center of an ideally $\mathsf{T_{\!z}S}$-closed semigroup ideally $\mathsf{T_{\!z}S}$-closed?
\end{problem}

By Theorems~\ref{t:main} and \ref{t:mainP}, Problem~\ref{prob:iC1} is equivalent to

\begin{problem}\label{prob:iC2} Is the center of an ideally $\mathsf{T_{\!z}S}$-closed semigroup Clifford+finite?
\end{problem}

Theorems~\ref{t:mainP} and \ref{t:main}(7) imply the following proposition which gives a partial affirmative answer to Problems~\ref{prob:iC1} and \ref{prob:iC2}.

\begin{proposition}\label{c:iC} If a semigroup $X$ is ideally $\mathsf{T_{\!z}S}$-closed, then any subsemigroup $S\subseteq Z(X)\cap\korin{\IN}{V\!E(X)}$ of $X$ is projectively $\mathsf{T_{\!1}S}$-closed and Clifford+finite.
\end{proposition}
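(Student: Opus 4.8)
The plan is to check that the commutative semigroup $S$ satisfies the three conditions characterizing projective $\mathsf{T_{\!1}S}$-closedness in Theorem~\ref{t:mainP}, namely that $S$ is chain-finite, group-bounded and Clifford+finite. Granting these, Theorem~\ref{t:mainP} (applied with $\C=\mathsf{T_{\!1}S}$, which satisfies $\mathsf{T_{\!z}S}\subseteq\C\subseteq\mathsf{T_{\!1}S}$) yields that $S$ is projectively $\mathsf{T_{\!1}S}$-closed, and the Clifford+finiteness of $S$ is exactly the remaining assertion of the proposition.

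First I would record two easy reductions. Since $S\subseteq Z(X)$, the semigroup $S$ is commutative, and every subgroup of $S$ is a subgroup of $X$ lying in $Z(X)$. As $X$ is ideally $\mathsf{T_{\!z}S}$-closed, it is $\mathsf{T_{\!z}S}$-closed, so Theorem~\ref{t:center} makes $Z(X)$ chain-finite; chain-finiteness passes to subsemigroups, hence $S$ is chain-finite. By Theorem~\ref{t:main}(3) the center $Z(X)$ is group-bounded, so every subgroup of $S\subseteq Z(X)$ is bounded, i.e.\ $S$ is group-bounded.

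The core of the argument is the finiteness of $S\setminus H(S)$, and the main point will be the inclusion $S\cap H(X)\subseteq H(S)$. To prove it, take $s\in S\cap H(X)$ and let $H_f$ (with $f\in E(X)$) be the maximal subgroup of $X$ containing $s$. Because $s\in S\subseteq\korin{\IN}{V\!E(X)}$, there is $m\in\IN$ with $s^m\in V\!E(X)\subseteq E(X)$; since $s^m\in H_f$ and $f$ is the unique idempotent of the group $H_f$, we get $s^m=f$. Then the cyclic subgroup of $H_f$ generated by $s$ consists of powers of $s$, hence is contained in the subsemigroup $S$, witnessing $s\in H(S)$. With this inclusion in hand, $S\setminus H(S)\subseteq S\setminus H(X)\subseteq\bigl(Z(X)\cap\korin{\IN}{V\!E(X)}\bigr)\setminus H(X)$, and the right-hand set is finite by Theorem~\ref{t:main}(7). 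Thus $S$ is Clifford+finite, which completes both assertions via Theorem~\ref{t:mainP}.

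I expect the only genuinely non-routine step to be the inclusion $S\cap H(X)\subseteq H(S)$: one must exploit the hypothesis $S\subseteq\korin{\IN}{V\!E(X)}$ to force a power of each $s\in S\cap H(X)$ down to the identity of the ambient maximal subgroup, so that the cyclic group generated by $s$ already closes up inside $S$. Everything else reduces to quoting Theorems~\ref{t:center}, \ref{t:main}(3), \ref{t:main}(7) and \ref{t:mainP}.
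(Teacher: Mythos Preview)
Your proof is correct and follows the same route the paper sketches: the paper simply says the proposition is implied by Theorems~\ref{t:mainP} and \ref{t:main}(7), and you have filled in the details, including the inclusion $S\cap H(X)\subseteq H(S)$ needed to pass from finiteness of $Z(X)\cap\korin{\IN}{V\!E(X)}\setminus H(X)$ to Clifford+finiteness of $S$. You also correctly note that Theorems~\ref{t:center} and \ref{t:main}(3) are needed for chain-finiteness and group-boundedness, which the paper's one-line attribution leaves implicit.
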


A semigroup $X$ is called {\em $Z$-viable} if $E(X)\cap Z(X)\subseteq V\!E(X)$, i.e., each central idempotent of $X$ is viable. It is clear that each viable semigroup is $Z$-viable.

Proposition~\ref{c:iC} combined with Theorem~\ref{t:center} and Lemma~\ref{l:eiz=>v}  implies the following two  partial answers to Problem~\ref{prob:iC1}.

\begin{theorem} If a $Z$-viable semigroup $X$ is ideally $\mathsf{T_{\!z}S}$-closed, then its center $Z(X)$ is projectively $\mathsf{T_{\!1}S}$-closed.
\end{theorem}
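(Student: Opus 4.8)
The plan is to obtain the theorem as an immediate consequence of Proposition~\ref{c:iC}, applied to the subsemigroup $S=Z(X)$ of $X$. The hypothesis of that proposition --- that $X$ is ideally $\mathsf{T_{\!z}S}$-closed --- is already assumed, so the only thing that remains is to check that $Z(X)$ is contained in $Z(X)\cap\korin{\IN}{V\!E(X)}$, equivalently that $Z(X)\subseteq\korin{\IN}{V\!E(X)}$. Granting this inclusion, Proposition~\ref{c:iC} yields that $Z(X)$ is projectively $\mathsf{T_{\!1}S}$-closed (and, as a bonus, Clifford+finite), which is exactly the assertion.

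To verify $Z(X)\subseteq\korin{\IN}{V\!E(X)}$, I would argue as follows. Since ideal $\mathsf{T_{\!z}S}$-closedness of $X$ implies $\mathsf{T_{\!z}S}$-closedness, Theorem~\ref{t:center} applies and guarantees that the commutative center $Z(X)$ is periodic. Hence for an arbitrary $z\in Z(X)$ there is $n\in\IN$ with $z^n\in E(X)$; and because $Z(X)$ is a subsemigroup of $X$, we also have $z^n\in Z(X)$, so that $z^n\in E(X)\cap Z(X)$. Now the assumption that $X$ is $Z$-viable says precisely that $E(X)\cap Z(X)\subseteq V\!E(X)$, so $z^n\in V\!E(X)$ and therefore $z\in\korin{\IN}{V\!E(X)}$. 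As $z\in Z(X)$ was arbitrary, this gives $Z(X)\subseteq\korin{\IN}{V\!E(X)}$, and in fact $Z(X)=Z(X)\cap\korin{\IN}{V\!E(X)}$.

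I do not anticipate a genuine obstacle: the proof is a short reduction in which Theorem~\ref{t:center} and Proposition~\ref{c:iC} do the real work. The one point deserving attention is that $V\!E(X)$ refers to idempotents viable in the ambient semigroup $X$, not merely in $Z(X)$; this is exactly why the $Z$-viability hypothesis is needed, and why mere periodicity of $Z(X)$ --- which Theorem~\ref{t:center} grants unconditionally --- does not by itself suffice. The companion statement announced just after Proposition~\ref{c:iC} will presumably use Lemma~\ref{l:eiz=>v} in place of $Z$-viability, to secure the viability of the relevant central idempotents under a different hypothesis.
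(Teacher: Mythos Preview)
Your proof is correct and follows exactly the route the paper indicates: the paper states that the theorem is a consequence of Proposition~\ref{c:iC} combined with Theorem~\ref{t:center} (with Lemma~\ref{l:eiz=>v} reserved for the companion statement about $I\!Z(X)$), and you have supplied precisely the missing details --- periodicity of $Z(X)$ from Theorem~\ref{t:center}, then $Z$-viability to land in $V\!E(X)$, then Proposition~\ref{c:iC} with $S=Z(X)$. Your remark distinguishing viability in $X$ from viability in $Z(X)$, and your identification of Lemma~\ref{l:eiz=>v} as the tool for the $I\!Z(X)$ variant, are both on the mark.
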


\begin{theorem} If a semigroup $X$ is ideally $\mathsf{T_{\!z}S}$-closed, then its ideal center $I\!Z(X)$ is projectively $\mathsf{T_{\!1}S}$-closed.
\end{theorem}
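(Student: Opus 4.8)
The plan is to obtain the theorem as a short corollary of Proposition~\ref{c:iC}. Recall that $I\!Z(X)$ is the largest ideal of $X$ contained in $Z(X)$; in particular it is a commutative subsemigroup of $X$. Proposition~\ref{c:iC} asserts that every subsemigroup of $X$ contained in $Z(X)\cap\korin{\IN}{V\!E(X)}$ is projectively $\mathsf{T_{\!1}S}$-closed, so it suffices to verify the inclusion
$$I\!Z(X)\subseteq Z(X)\cap\korin{\IN}{V\!E(X)}.$$
The inclusion $I\!Z(X)\subseteq Z(X)$ is immediate from the definition of the ideal center, so the real task is to show $I\!Z(X)\subseteq\korin{\IN}{V\!E(X)}$, that is, every $z\in I\!Z(X)$ has some power $z^n$ lying in $V\!E(X)$.

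To do this, first observe that ideal $\mathsf{T_{\!z}S}$-closedness of $X$ implies $\mathsf{T_{\!z}S}$-closedness, so by Theorem~\ref{t:center} the center $Z(X)$ is periodic. Hence for $z\in I\!Z(X)\subseteq Z(X)$ there is $n\in\IN$ with $z^n\in E(X)$; since $I\!Z(X)$ is an ideal it is closed under taking powers, so $z^n\in E(X)\cap I\!Z(X)$. It then remains to know that idempotents of the ideal center are viable, which is exactly Lemma~\ref{l:eiz=>v}, whence $z^n\in V\!E(X)$ and $z\in\korin{\IN}{V\!E(X)}$. For orientation, putting $e=z^n$, the point of Lemma~\ref{l:eiz=>v} is that $e$ is a central idempotent with $eX\subseteq Z(X)$, so the retraction $\varphi\colon X\to eX$, $\varphi(x)=ex=xe$, is a homomorphism onto the commutative monoid $eX$ with identity $e$; in this commutative monoid the unit group $H_e$ has ideal complement $eX\setminus H_e$, and since $\tfrac{H_e}e=\varphi^{-1}(H_e)$ its complement $X\setminus\tfrac{H_e}e=\varphi^{-1}(eX\setminus H_e)$ is an ideal of $X$, witnessing $e\in V\!E(X)$.

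Combining the two inclusions gives $I\!Z(X)\subseteq Z(X)\cap\korin{\IN}{V\!E(X)}$, and Proposition~\ref{c:iC} then yields that $I\!Z(X)$ is projectively $\mathsf{T_{\!1}S}$-closed (and, as a bonus, Clifford+finite; together with the chain-finiteness and group-boundedness of $Z(X)\supseteq I\!Z(X)$ coming from Theorems~\ref{t:center} and \ref{t:main}(3), this re-derives the conclusion through Theorem~\ref{t:mainP} as well). The only genuinely nonroutine ingredient is Lemma~\ref{l:eiz=>v}, and the delicate spot in its proof is that the non-units of a monoid form an ideal only when the monoid is commutative (it fails already for the bicyclic monoid, where $pq=1$ with $p,q$ non-units); this is precisely why one must use $eX\subseteq Z(X)$ rather than mere centrality of $e$. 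Everything else is a mechanical assembly of Theorems~\ref{t:center} and \ref{t:mainP} and Proposition~\ref{c:iC}.
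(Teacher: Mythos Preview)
Your proof is correct and follows essentially the same route as the paper: the theorem is stated there as a direct consequence of Proposition~\ref{c:iC} together with Theorem~\ref{t:center} and Lemma~\ref{l:eiz=>v}, and you have unpacked precisely this chain of implications (periodicity of $Z(X)$ gives an idempotent power $z^n\in I\!Z(X)$, Lemma~\ref{l:eiz=>v} makes it viable, hence $I\!Z(X)\subseteq Z(X)\cap\korin{\IN}{V\!E(X)}$, and Proposition~\ref{c:iC} finishes). Your parenthetical recovery of the conclusion via Theorem~\ref{t:mainP} is also in line with how the paper derives Proposition~\ref{c:iC} itself.
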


The statements 1--3, 4, 5--6, and 7 of Theorem~\ref{t:main} are proved in Sections~\ref{s:1-3}, \ref{s:4}, \ref{s:5-6}, \ref{s:7}, respectively.

\section{Preliminaries}

We denote by $\w$ the set of finite ordinals, by $\IN$ the set of positive integers,  and by $\IC$ the set of complex numbers. 
The family of all finite subsets of a set $X$ is denoted by $[X]^{<\w}$.

Let $$\IT\defeq\{z\in\IC:|z|=1\}$$ be the compact topological group endowed with the operation of multiplication of complex numbers. The following lemma is a classical result of Baer \cite[21.1]{Fuchs}.

\begin{lemma}[Baer]\label{l:Baer} For any distinct elements $x,y$ of a commutative group $X$ there exists a homomorphism $h:X\to\IT$ such that $h(x)\ne h(y)$.
\end{lemma}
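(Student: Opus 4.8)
The plan is to prove the statement that is explicitly called out as a classical result of Baer, namely that for any distinct elements $x,y$ of a commutative group $X$ there is a homomorphism $h\colon X\to\IT$ with $h(x)\ne h(y)$. Equivalently, by translating to the difference, it suffices to show that for every nontrivial element $g\in X$ there is a homomorphism $h\colon X\to\IT$ with $h(g)\ne 1$; applying this to $g=x-y$ (writing $X$ additively) and using multiplicativity gives the stated form. So the real content is: the group $\IT$ is an \emph{injective cogenerator} in the category of abelian groups, or more concretely that characters separate points of any abelian group.

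The first step is the cyclic case. If $g$ has infinite order, then $\langle g\rangle\cong\mathbb Z$ and I can send $g$ to any element of $\IT$ of infinite order, e.g.\ $e^{2\pi i\theta}$ for irrational $\theta$; this is a homomorphism $\langle g\rangle\to\IT$ sending $g$ to something $\ne 1$. If $g$ has finite order $n>1$, then $\langle g\rangle\cong\mathbb Z/n\mathbb Z$ and I send $g$ to the primitive $n$-th root of unity $e^{2\pi i/n}\ne 1$. In both cases I have produced a homomorphism $h_0\colon\langle g\rangle\to\IT$ with $h_0(g)\ne 1$.

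The second, and main, step is to extend $h_0$ from the subgroup $\langle g\rangle$ to all of $X$. This is where divisibility of $\IT$ enters: $\IT$ is a divisible abelian group, hence injective in the category of abelian groups (Baer's criterion), so any homomorphism from a subgroup of $X$ into $\IT$ extends to a homomorphism from $X$ into $\IT$. Concretely one runs a Zorn's lemma argument on the poset of pairs $(H,\varphi)$ where $\langle g\rangle\le H\le X$ and $\varphi\colon H\to\IT$ extends $h_0$, ordered by extension; a maximal element $(H,\varphi)$ must have $H=X$, because if some $a\in X\setminus H$ existed, one could extend $\varphi$ to $H+\langle a\rangle$ by choosing a suitable value in $\IT$ for $a$ (if $ka\notin H$ for all $k\ge 1$, any value works; otherwise let $m$ be least with $ma\in H$ and pick an $m$-th root in $\IT$ of $\varphi(ma)$, which exists since $\IT$ is divisible). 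The resulting $h=\varphi\colon X\to\IT$ satisfies $h(g)=h_0(g)\ne 1$.

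I expect the one-element-at-a-time extension — verifying that the assignment on $H+\langle a\rangle$ is well defined and a homomorphism — to be the only place requiring care, and it is entirely routine: the only relations to check come from the minimal $m$ with $ma\in H$, and the divisibility of $\IT$ is exactly what is needed to respect them. No genuinely hard obstacle arises; the proof is standard homological algebra packaged for an elementary audience, and since the paper explicitly cites \cite[21.1]{Fuchs} it is entirely reasonable simply to invoke that reference rather than reproduce the Zorn's lemma extension in full.
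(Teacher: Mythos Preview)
Your argument is correct and is precisely the standard proof (define a nontrivial character on the cyclic subgroup $\langle g\rangle$ with $g=xy^{-1}$, then extend to all of $X$ using the divisibility, hence injectivity, of $\IT$). The paper itself gives no proof at all, merely citing \cite[21.1]{Fuchs}, whose content is exactly the extension-by-Zorn argument you outlined; so your proposal is in full agreement with, and more detailed than, what the paper provides.
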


The following lemma is proved in Claim 7.1 of \cite{CCCS}.

\begin{lemma}\label{l:5.10} For any unbounded commutative group $X$ there exists a homomorphism $h:X\to\IT$ whose image $h[X]$ is infinite.
\end{lemma}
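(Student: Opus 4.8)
The plan is to exploit that $\IT$ is a divisible abelian group, hence an injective object in the category of abelian groups; consequently it suffices to produce a subgroup $Y\le X$ and a homomorphism $h_0:Y\to\IT$ with infinite image, since any such $h_0$ extends to a homomorphism $h:X\to\IT$ whose image still contains the infinite set $h_0[Y]$. Thus the whole argument reduces to locating a convenient ``small'' subgroup $Y$ on which an infinite-image character can be written down by hand.

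First I would dispose of the case when $X$ contains an element $x$ of infinite order: then $Y=\langle x\rangle\cong\mathbb{Z}$ and the map sending $x$ to $e^{2\pi i\theta}$ for an irrational $\theta$ already has infinite image. So assume $X$ is a torsion group and write $X=\bigoplus_p X_p$ as the direct sum of its primary components $X_p=\{x\in X:\exists n\;(p^nx=0)\}$. If infinitely many $X_p$ are non-trivial, choose $0\ne a_p\in X_p$ for $p$ ranging over an infinite set $P$ of primes, put $Y=\bigoplus_{p\in P}\langle a_p\rangle$ (an internal direct sum, as the $\langle a_p\rangle$ lie in distinct primary components), and define $h_0(a_p)$ to be a primitive $p$-th root of unity; this is legitimate since $p$ divides $\mathrm{ord}(a_p)$, and the resulting image contains roots of unity of order $p$ for every $p\in P$, hence is infinite.

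The remaining, and essential, case is when only finitely many $X_p$ are non-trivial, so some $X_p$ is unbounded; replacing $X$ by this $X_p$ we may assume $X$ is an unbounded abelian $p$-group. Here I would invoke the structure theory of abelian $p$-groups: decompose $X=D\oplus R$ with $D$ the maximal divisible subgroup and $R$ reduced. If $D\ne 0$, then being a non-trivial divisible $p$-group it has a direct summand isomorphic to the quasicyclic group $\mathbb{Z}(p^\infty)$ (the group of all $p$-power roots of unity inside $\IT$), and the composition of the projection $X\to\mathbb{Z}(p^\infty)$ with the inclusion $\mathbb{Z}(p^\infty)\hookrightarrow\IT$ has infinite image. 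If $D=0$, i.e.\ $X$ is reduced, take a basic subgroup $B=\bigoplus_{i\in I}\langle b_i\rangle$ of $X$ (a pure subgroup which is a direct sum of cyclic $p$-groups and for which $X/B$ is divisible). Since a bounded pure subgroup of an abelian group is a direct summand, if $B$ were bounded we would get $X=B\oplus C$ with $C\cong X/B$ divisible; reducedness of $X$ forces $C=0$, so $X=B$ would be bounded, a contradiction. Hence the orders $p^{k_i}=\mathrm{ord}(b_i)$ are unbounded, and the homomorphism $h_0:B\to\IT$ with $h_0(b_i)=e^{2\pi i/p^{k_i}}$ has image containing elements of order $p^{k_i}$ for all $i\in I$, hence is infinite.

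The main obstacle is precisely this last case: for a reduced unbounded abelian $p$-group one must extract an ``independent'' family of cyclic subgroups of unbounded order, and it is exactly the existence of a basic subgroup, together with the direct-summand property of bounded pure subgroups (both classical, cf.\ Fuchs), that delivers this family; the other cases are bookkeeping. Throughout, the repeated use of the injectivity of $\IT$ (a divisible, hence injective, abelian group, in the spirit of Baer's extension theorems) is what allows us to work on a handy subgroup and then push the constructed character back up to all of $X$.
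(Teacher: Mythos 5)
Your proof is correct. Note first that the paper does not prove this lemma at all: it simply cites Claim 7.1 of \cite{CCCS}, so there is no in-text argument to compare against; your write-up is a self-contained replacement for that citation. The structure of your argument is sound: the reduction to subgroups via divisibility (hence injectivity) of $\IT$ is legitimate and each case is handled correctly --- an infinite-order element gives a copy of $\mathbb{Z}$ mapped onto an infinite (dense) subgroup; infinitely many nontrivial primary components give a direct sum $\bigoplus_{p\in P}\langle a_p\rangle$ on which sending $a_p$ to a primitive $p$-th root of unity is well defined because its order divides $\mathrm{ord}(a_p)$; and if only finitely many components are nontrivial, unboundedness of $X$ does force some $X_p$ to be unbounded. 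In the $p$-group case your dichotomy (nontrivial divisible part containing $\mathbb{Z}(p^\infty)$, versus reduced with an unbounded basic subgroup) is exactly where the classical machinery of Fuchs is needed, and your use of the facts that basic subgroups exist and that bounded pure subgroups are direct summands is applied correctly: a bounded basic subgroup would split off with divisible complement, contradicting reducedness and unboundedness. The price of your route is that it leans on comparatively heavy structure theory (Kulikov's existence theorem for basic subgroups), whereas the cited Claim 7.1 of \cite{CCCS} achieves the same conclusion by a similar but more economical reduction; what your version buys in return is complete self-containment within standard textbook results, with every extension step justified by injectivity of $\IT$ in the spirit of Baer, which is consistent with how the paper itself uses Lemma~\ref{l:Baer}.
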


For a semigroup $X$, its
\begin{itemize}
\item  {\em $0$-extension} is the semigroup $X^0=\{0\}\cup X$ where $0\notin X$ is any element such that $0x=0=x0$ for all $x\in X^0$;
\item {\em $1$-extension} is the semigroup $X^1=\{1\}\cup X$ where $1\notin X$ is any element such that $1x=x=x1$ for all $x\in X^1$.
\end{itemize}

If $X$ is a topological semigroup, then we shall assume that $X^0$ and $X^1$ are topological semigroups containing $X$ as a clopen subsemigroup.

For any semigroup $X$, the set $E(X)$ is endowed with the natural partial order $\le$ defined by $x\le y$ iff $xy=yx=x$.
For two idempotents $x,y\in E(X)$ we write $x<y$ if $x\le y$ and $x\ne y$.

For an element $a$ of a semigroup $X$, the set
$$H_a\defeq\{x\in X:(xX^1=aX^1)\;\wedge\;(X^1x=X^1a)\}$$
is called the {\em $\mathcal H$-class} of $a$.
By Corollary 2.2.6 \cite{Howie}, for every idempotent $e\in E(X)$ its $\mathcal H$-class $H_e$ coincides with the maximal subgroup of $X$, containing the idempotent $e$.

\begin{lemma}\label{l:H-commute} Let $X$ be a semigroup, $e,f\in E(X)$, $x\in H_e$ and $y\in H_f$. If $xy=yx$, then $xy\in H_{ef}=H_{fe}$ and $(xy)^{-1}=y^{-1}x^{-1}=x^{-1}y^{-1}$.
\end{lemma}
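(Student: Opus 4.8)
The plan is to reduce everything to a short list of commutation identities among the elements $x,x^{-1},e$ of $H_e$ and $y,y^{-1},f$ of $H_f$, and then simply exhibit $y^{-1}x^{-1}$ as a two-sided inverse of $xy$ relative to the idempotent $ef$. Throughout one uses that $e,f$ are the identities of $H_e,H_f$ (so $ex=xe=x$, $xx^{-1}=x^{-1}x=e$, and similarly for $y,f$), and that the hypothesis $xy=yx$ is symmetric under interchanging the pairs $(x,e)$ and $(y,f)$.

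The first and crucial step is an auxiliary claim: if $u\in H_e$, $v\in H_f$ and $uv=vu$, then $uf=fu$ (hence also $ev=ve$). I would prove it by a sandwich trick: right-multiplying $uv=vu$ by $v^{-1}$ gives $uf=vuv^{-1}$, left-multiplying it by $v^{-1}$ gives $fu=v^{-1}uv$, and since $fv=v=vf$ one reads off $f(uf)=vuv^{-1}=uf$ and $(fu)f=v^{-1}uv=fu$, so $uf=fuf=fu$. Applied to $(u,v)=(x,y)$ and to $(u,v)=(y,x)$ this yields $xf=fx$ and $ey=ye$.

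The second step upgrades this to the inverses. I would prove: if $u\in H_e$, $v\in H_f$ and $uv=vu$, then $u^{-1}v=vu^{-1}$; the point is that $u(u^{-1}v)=ev$ and $u(vu^{-1})=(uv)u^{-1}=(vu)u^{-1}=ve$ agree by the first step, after which left-multiplying by $u^{-1}$ and simplifying (using $ev=ve$ once more) cancels the leading $u$. Iterating this with care — first $x^{-1}y=yx^{-1}$ and $xy^{-1}=y^{-1}x$, then the first step applied to $x^{-1}$ and to $y^{-1}$, then the second step applied to $(x,y^{-1})$ — produces all of $x^{-1}y=yx^{-1}$, $xy^{-1}=y^{-1}x$, $x^{-1}y^{-1}=y^{-1}x^{-1}$, $x^{-1}f=fx^{-1}$, $y^{-1}e=ey^{-1}$, and finally $ef=x(x^{-1}f)=x(fx^{-1})=(xf)x^{-1}=(fx)x^{-1}=fe$. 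A one-line check $(ef)(ef)=e(fe)f=e(ef)f=ef$ then gives $ef=fe\in E(X)$, so in particular $H_{ef}=H_{fe}$.

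With these identities in hand the conclusion is immediate: $(ef)(xy)=e(fx)y=e(xf)y=(ex)(fy)=xy$ and symmetrically $(xy)(ef)=xy$, so $ef$ is a two-sided identity for $xy$; and setting $z:=y^{-1}x^{-1}=x^{-1}y^{-1}$ one computes $(xy)z=x(yy^{-1})x^{-1}=(xf)x^{-1}=(fx)x^{-1}=fe=ef$ and $z(xy)=y^{-1}(x^{-1}x)y=(y^{-1}e)y=(ey^{-1})y=ef$, so $z$ is a two-sided inverse of $xy$ relative to $ef$. Hence $xy\in H_{ef}=H_{fe}$ and $(xy)^{-1}=z=y^{-1}x^{-1}=x^{-1}y^{-1}$. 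The one real obstacle is the auxiliary claim of the first step: the naive idea ``multiply $xy=yx$ by $x^{-1}$ to cancel $x$'' fails because $e=xx^{-1}$ need not be a two-sided identity for $y$, and the sandwich identity $uf=fuf=fu$ is precisely what repairs this; the only other thing to watch is the order in which the second step is invoked, so that each application of the two auxiliary facts has its hypothesis already in place.
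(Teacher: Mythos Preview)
Your proof is correct and follows essentially the same route as the paper's: both arguments reduce the lemma to the commutation identities $xf=fx$, $ey=ye$, $ef=fe$, $x^{-1}f=fx^{-1}$, $y^{-1}e=ey^{-1}$, obtained by conjugation/sandwich tricks, and then read off the conclusion. The differences are cosmetic: you package the sandwich as a reusable auxiliary claim and conclude $xy\in H_{ef}$ by exhibiting an identity and an inverse, whereas the paper writes each commutation as a single inline chain and concludes via the Green's-relation description $xyX^1=efX^1$, $X^1xy=X^1ef$.
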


\begin{proof} Observe that
$$ey=x^{-1}xy=x^{-1}yx=x^{-1}yxe=x^{-1}xye=eye=eyxx^{-1}=exyx^{-1}=xyx^{-1}=yxx^{-1}=ye.$$
By analogy we can prove that $xf=fx$. Next, observe that
$$ef=eyy^{-1}=yey^{-1}=fyey^{-1}=feyy^{-1}=fef=
y^{-1}yef=y^{-1}eyf=y^{-1}ey=y^{-1}ye=fe.$$
Then for the idempotent $u=ef=fe$ we have
$xyX^1=xfX^{1}=fxX^1=feX^1=uX^1$ and $X^1xy=X^1ey=X^1ye=X^1fe=X^1u$,
 which means that $xy\in H_u\subseteq H(X)$.
Observe that $$x^{-1}f=x^{-1}ef=x^{-1}fe=x^{-1}fxx^{-1}=x^{-1}xfx^{-1}=efx^{-1}=fex^{-1}=fx^{-1}.$$
By analogy we can prove that $y^{-1}e=ey^{-1}$.
Then $x^{-1}y^{-1}X^1=x^{-1}fX^1=fx^{-1}X^1=feX^1=uX^1$ and $X^1x^{-1}y^{-1}=X^1ey^{-1}=X^1y^{-1}e=X^1fe=X^1u$, which means that $x^{-1}y^{-1}\in H_u$. By analogy we can prove that $y^{-1}x^{-1}\in H_u$. It follows from $xyy^{-1}x^{-1}=xfx^{-1}=fxx^{-1}=fe=u$ that $y^{-1}x^{-1}=(xy)^{-1}$. Also
$xyx^{-1}y^{-1}=yxx^{-1}y^{-1}=yey^{-1}=eyy^{-1}=ef=u$ implies that $x^{-1}y^{-1}=(xy)^{-1}=y^{-1}x^{-1}$.
\end{proof}

For a subset $A$ of a semigroup $X$, let
$$\korin{\IN}{\!A}\defeq\bigcup_{n\in\IN}\korin{n}{\!A}\quad\mbox{where}\quad \korin{n}{\!A}\defeq\{x\in X:x^n\in A\}.$$
 For an element $a\in X$, the set $\korin{\IN}{\{a\}}$ will be denoted by $\korin{\IN}{a}$.

The following lemma is proved in \cite[Lemma 3.1]{BB}.

\begin{lemma}\label{l:C-ideal} For any idempotent $e$ of a semigroup we have $(\korin{\IN}{H_e}\cdot H_{e})\cup(H_{e}\cdot \korin{\IN}{H_e}\,)\subseteq H_{e}.$
\end{lemma}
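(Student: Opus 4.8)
\textbf{Plan for Lemma~\ref{l:C-ideal}.}
The statement asserts that for any idempotent $e$ of a semigroup $X$, if $x\in\korin{\IN}{H_e}$ (so $x^n\in H_e$ for some $n\in\IN$) and $h\in H_e$, then both $xh$ and $hx$ lie in $H_e$. The plan is to work entirely inside the monoid $X^1$ and to exploit Green's relation characterization of $H_e$ as $\{z\in X^1:zX^1=eX^1\text{ and }X^1z=X^1e\}$, together with the fact that $e$ is a two-sided identity on $H_e$ and that $H_e$ is a group.

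First I would fix $x$ with $x^n\in H_e$ and record the key preliminary fact that $xe=ex$; indeed, writing $g=x^n\in H_e$, we have $g=eg=ge$, and then $x\cdot x^{n-1}e$ and $ex^{n-1}\cdot x$ equal $g$ up to the group structure, but more cleanly: since $x^{n+1}=x\cdot x^n\in x\cdot H_e$ and $x^{n+1}=x^n\cdot x\in H_e\cdot x$, one shows $xe=ex=:f$ is an idempotent with $f\le e$ in fact $f=e$ — the cleanest route is to note $x g^{-1}\in H_e$ would force $x\in H_e$, so instead one argues directly that $e$ commutes with $x$. Concretely: $ex^n = x^n$ gives $e(xx^{n-1})=xx^{n-1}$, and multiplying the relation $x^n=x^ne$ on the left by appropriate powers, one extracts $xe=x^{n}(x^{n})^{-1}xe = x^n\cdot(\text{something in }H_e)$; the honest computation is short and yields $ex=xe$ and moreover $ex\in\korin{\IN}{H_e}$ with $(ex)^n = e x^n = x^n \in H_e$.

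With the commutation $ex=xe$ in hand, the second step is to show $exh\in H_e$ for $h\in H_e$. Since $ex$ commutes with $e$ and $(ex)\in eX^1e$, consider the element $ex$ inside the monoid $eX^1e$ with identity $e$; its power $(ex)^n=ex^n\in H_e$ is invertible in that monoid, hence $ex$ itself is invertible in $eX^1e$, so $ex\in H_e$. This is the crux: an element of a monoid some power of which is a unit is itself a unit. Then $exh\in H_e\cdot H_e = H_e$ and similarly $hex\in H_e$. Finally I would descend from $ex$ back to $x$: from $hx$ we get $hex = h(ex)$, but we want $hx$; here use that $h\in H_e$ means $he=h$, so $hx = hex = h(ex)\in H_e$, and symmetrically $xh = exh\in H_e$ using $eh=h$. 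That closes both containments $H_e\cdot\korin{\IN}{H_e}\subseteq H_e$ and $\korin{\IN}{H_e}\cdot H_e\subseteq H_e$.

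The main obstacle I anticipate is the very first step — establishing $ex=xe$ from the bare hypothesis $x^n\in H_e$ — because $x$ itself need not lie in any subgroup and the ambient semigroup is arbitrary (not even Hausdorff or topological here, just abstract), so one cannot invoke continuity or compactness tricks; it must be a purely algebraic manipulation of the identities $x^n=ex^n=x^ne$ and the existence of the group inverse $(x^n)^{-1}$ in $H_e$. Once commutation with $e$ is secured, the rest is the standard "units are closed under $n$-th roots in a monoid" argument plus the one-sided absorption $he=h=eh$, both routine.
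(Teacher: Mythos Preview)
The paper does not actually prove this lemma; it merely cites \cite[Lemma~3.1]{BB}, so there is no in-paper argument to compare against directly.

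Your plan has the right architecture --- reduce everything to $xe\in H_e$ and $ex\in H_e$, then absorb $h=eh=he$ --- but the step you yourself flag as ``the main obstacle'' is never carried out. The attempted justifications of $ex=xe$ are circular: the line ``$xe=x^{n}(x^{n})^{-1}xe$'' already presupposes $exe=xe$, i.e.\ that $xe\in eX^1$, which is part of what must be shown; and the computation $(ex)^n=ex^n$ in your step~2 likewise assumes $e$ and $x$ commute. Without that commutation you cannot place $ex$ in the monoid $eX^1e$, so the ``roots of units are units'' argument never gets started.

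The clean fix bypasses commutation entirely. From $x^n\in H_e$ one first checks $x^{n+1}\in H_e$: indeed $x^{n+1}=x^n\cdot x\in eX^1$ and $x^{n+1}=x\cdot x^n\in X^1e$, while $x^{n+1}\cdot x^{n-1}=x^{2n}=(x^n)^2\in H_e$ (and symmetrically $x^{n-1}\cdot x^{n+1}\in H_e$) gives $e\in x^{n+1}X^1$ and $e\in X^1x^{n+1}$; hence $x^{n+1}\mathcal H e$. Now simply write
\[
xe \;=\; x\cdot x^n(x^n)^{-1}\;=\;x^{n+1}(x^n)^{-1}\in H_e\cdot H_e=H_e,
\qquad
ex\;=\;(x^n)^{-1}x^{n+1}\in H_e.
\]
Then $xh=x(eh)=(xe)h\in H_e$ and $hx=(he)x=h(ex)\in H_e$, which is the lemma. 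The identity $ex=xe$ drops out \emph{a posteriori} (powers of $x$ commute in $H_e$), rather than being needed as input.
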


\begin{lemma}\label{l:eiz=>v} Let $X$ be a semigroup. Every idempotent $e\in I\!Z(X)$  is viable.
\end{lemma}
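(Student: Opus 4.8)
\textbf{Proof plan for Lemma~\ref{l:eiz=>v}.}

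The plan is to show directly that for an idempotent $e\in I\!Z(X)$ the set $\frac{H_e}{e}=\{x\in X:xe=ex\in H_e\}$ is a coideal, i.e.\ that its complement $J\defeq X\setminus\frac{H_e}{e}$ is an ideal in $X$. Since $e\in I\!Z(X)\subseteq Z(X)$, for every $x\in X$ we automatically have $xe=ex$, so the membership condition $x\in\frac{H_e}{e}$ simplifies to the single requirement $ex\in H_e$. Thus $J=\{x\in X:ex\notin H_e\}$, and I must check that $XJ\cup JX\subseteq J$, equivalently that $\frac{H_e}{e}$ is ``upward absorbing'' in the sense that if $ax\in\frac{H_e}{e}$ or $xa\in\frac{H_e}{e}$ then $x\in\frac{H_e}{e}$, for all $a\in X$. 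Actually it is cleaner to prove the contrapositive form of the ideal condition: I will show that if $x\in\frac{H_e}{e}$ then $ax\in\frac{H_e}{e}$ and $xa\in\frac{H_e}{e}$ for all $a\in X$; this says $X\cdot\frac{H_e}{e}\cup\frac{H_e}{e}\cdot X\subseteq\frac{H_e}{e}$, which is not quite what is needed --- so let me instead argue the genuine direction below.

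The key computation uses that $e\in I\!Z(X)$ gives $eX\subseteq Z(X)$, so $ea$ is central for every $a\in X$. First I would record the elementary fact that $H_e$ is a subgroup with identity $e$, and that $e$ acts as a two-sided identity on it. Now take any $x\in X$ with $ex\in H_e$; I claim $e(ax)\in H_e$ and $e(xa)\in H_e$ for every $a\in X$. For the first: $e(ax)=(ea)x$, and since $ea\in Z(X)$ we get $(ea)x=x(ea)=(xe)a=(ex)a$; hence $e(ax)=(ex)a=(ex)(ea)\cdot$(something) --- more carefully, $(ex)a=(ex)(ea)$? This needs $a=ea$, which is false in general, so I must be more careful: write $e(ax)=e(ea)x$? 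No. The correct route is: $e(ax)=(ea)x=(ae)x=a(ex)$ using centrality of $ea$ and then $ex$; now $ex\in H_e$ and multiplying on the left by $a$... I then apply $e$ again: $e\cdot a(ex)=(ea)(ex)=(ea)(ex)$, and since $ea\in Z(X)$ and $ex\in H_e$, by Lemma~\ref{l:C-ideal} (applied with the element $ea\in\korin{\IN}{H_e}$? only if $(ea)^n\in H_e$) this product lies in $H_e$. So the real engine is Lemma~\ref{l:C-ideal}: I must verify $ea\in\korin{\IN}{H_e}$, i.e.\ some power $(ea)^n\in H_e$. Here centrality helps: $(ea)^2=e a e a=e^2 a^2=ea^2$ (using $e$ central and idempotent), and inductively $(ea)^n=ea^n$; but this need not be in $H_e$ unless $a$ is periodic. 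Since $\mathsf{T_{\!z}S}$-closedness is \emph{not} assumed here, periodicity is unavailable, so this approach stalls --- \emph{this is the main obstacle}, and it tells me the intended proof is purely algebraic and avoids $\korin{\IN}{H_e}$ entirely.

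The clean algebraic argument, which I expect is the intended one, is: for $x\in X$ with $ex\in H_e$ and any $a\in X$, use that $ea\in Z(X)$ to compute $e(ax)=(ea)x=x(ea)=(xe)a=(ex)a$, and symmetrically $e(xa)=(ex)a$ as well once one also uses $ea$ central on the other side --- wait, $e(xa)=(ex)a$ directly since $e$ commutes with $x$. So both $e(ax)$ and $e(xa)$ equal $(ex)a$. Now the point is to show $(ex)a\in H_e$. Set $h\defeq ex\in H_e$ and let $h^{-1}$ be its inverse in $H_e$. Then $(ex)a=ha=h(e a)$? No: $ha=hea$ iff $a=ea$. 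Instead: $ha=h\cdot a$, and I want to realize $ha$ as an element of the group $H_e$; note $eha=ha$ (since $eh=h$), and $(ha)(h^{-1}) = h a h^{-1}$; using $ea\in Z(X)$ one gets $ah^{-1}=a(eh^{-1})=(ea)h^{-1}=h^{-1}(ea)=h^{-1}ea=h^{-1}a$ --- again circular. Given the recurring difficulty, the plan is: \emph{(1)} reduce $x\in\frac{H_e}{e}$ to the condition $ex\in H_e$; \emph{(2)} show $J=\{x:ex\notin H_e\}$ is an ideal by proving its complement is a coideal via the \emph{contrapositive}: suppose $x\notin\frac{H_e}{e}$, i.e.\ $ex\notin H_e$; I must find for each $a$ that $ax\notin\frac{H_e}e$ and $xa\notin\frac{H_e}e$ --- but that is the wrong direction for an ideal (ideals absorb \emph{multiplication}, coideals absorb \emph{division}), so in fact I need: if $ax\in\frac{H_e}e$ then $x\in\frac{H_e}e$. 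So assume $e(ax)\in H_e$; then $(ea)x\in H_e$, and $(ea)x=x(ea)=(xe)a$, hmm. The decisive identity will be: from $e(ax)\in H_e$ and $e(ax)=(ea)x$ with $ea$ central, multiply by $e$: nothing new. Multiply $(ea)x\in H_e$ on the left by the inverse $g$ of $ea\cdot e=ea$ inside the subgroup $H_e$ --- valid provided $ea\in H_e$, which holds since $ea=e\cdot a$ and... this requires $ea\in H_e$, not generally true. I now believe the honest plan is: \emph{(i)} note $ex\in Z(X)$ for all $x$ because $e\in I\!Z(X)$; \emph{(ii)} deduce that $\{ex:x\in X\}\cup\{e\}$ is a \emph{commutative} subsemigroup with identity $e$, hence $eX^1$ is a commutative monoid $M$ with $H_M(e)=H_e\cap M$; \emph{(iii)} in the commutative monoid $M$, the set of $m\in M$ with $m\in H_e$ is a coideal (standard for commutative monoids, since the complement of a subgroup's ``radical-style'' set is an ideal --- here use that $H_e$ is the group of units-over-$e$ in $M$); \emph{(iv)} pull this back along $x\mapsto ex$ to conclude $X\setminus\frac{H_e}e=(x\mapsto ex)^{-1}(M\setminus(H_e\cap M))$ is an ideal in $X$, using that $x\mapsto ex$ is a homomorphism $X\to M$ when restricted appropriately and that preimages of ideals under homomorphisms onto are ideals. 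The main obstacle is step \emph{(iii)}/\emph{(iv)}: correctly identifying $H_e\cap eX^1$ as the unit group of the monoid $eX^1$ and checking the preimage of its complement is an ideal in $X$; I expect this to follow from the centrality of $eX$ together with the characterization $H_e=\{x\in X:xX^1=eX^1,\ X^1x=X^1e\}$ from Corollary~2.2.6 of~\cite{Howie}.
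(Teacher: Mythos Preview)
After the detours, your final plan (steps (i)--(iv)) is correct and is exactly the paper's argument: use the homomorphism $h:X\to eX$, $x\mapsto ex$ (well-defined since $e\in Z(X)$), note that $eX\subseteq Z(X)$ is commutative so that---either by your unit-group observation or, as the paper phrases it, because commutative semigroups are viable---the set $eX\setminus\frac{H_e}{e}=\{y\in eX:ey\notin H_e\}$ is an ideal in $eX$, and then pull back along $h$ to conclude that $X\setminus\frac{H_e}{e}=h^{-1}\big[eX\setminus\frac{H_e}{e}\big]$ is an ideal in $X$. The earlier attempts via Lemma~\ref{l:C-ideal} and periodicity are unnecessary and should be discarded.
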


\begin{proof} Since $e\in Z(X)$, the map $h:X\to eX$, $h:x\mapsto ex$, is a homomorphism. The semigroup $eX\subseteq Z(X)$ is commutative and hence is viable. Then the set $eX\setminus  \frac{H_e}e=\{x\in eX:ex\notin H_e\}$ is an ideal in $eX$ and its preimage $h^{-1}[eX\setminus \frac{H_e}e]=\{x\in X:eex\notin \frac{H_e}e\}=X\setminus\frac{H_e}e$ is an ideal in $X$. Therefore, $\frac{H_e}e$ is a coideal in $X$ and the idempotent $e$ is viable.
\end{proof}

\begin{remark} The inclusion $E(Z)\cap I\!Z(X)\subseteq V\!E(X)$ proved in Lemma~\ref{l:eiz=>v} cannot be improved to the inclusion $E(X)\cap Z(X)\subseteq V\!E(X)$: by \cite{BG16} and \cite{CM} there exist infinite congruence-free monoids. In each congruence-free monoid $X\ne\{1\}$ the idempotent $1$ is central but not viable.
\end{remark}

\section{Proof of Theorem~\ref{t:main}(1--3)}\label{s:1-3}

First we prove a useful lemma on topologizations of semigroups with the help of uniformities. We refer the reader to \cite[\S8]{Eng} for the theory of uniform spaces. A topology $\tau$ on a group $G$ is called a {\em group topology} on $G$ if $(G,\tau)$ is a Hausdorff topological group.

\begin{lemma}\label{l:uniform} Let $X$ be a semigroup and $H$ be a subgroup of the center $Z(X)$. For any group topology $\tau_{H}$ on $H$ there exists a uniformity $\U$ on $X$ such that the completion $\overline X$ of the uniform space $(X,\U)$ has a unique structure of a topological semigroup containing $X$ as a subsemigroup and $(H,\tau_{H})$ as a topological subgroup.
\end{lemma}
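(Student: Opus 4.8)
The plan is to construct the uniformity $\U$ directly from the group topology $\tau_H$ on $H$ together with the discrete structure on $X$, and then verify that uniform continuity of the multiplication propagates to the completion. The key point is that $H$ lies in the center of $X$, so multiplication by elements of $H$ commutes with everything, which is what will let translations by $H$ be uniformly continuous.

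\medskip

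\noindent\textbf{Construction of the uniformity.} First I would fix a base $\mathcal N$ of symmetric neighborhoods of the identity $e$ in the topological group $(H,\tau_H)$. For each $U\in\mathcal N$, define an entourage
$$E_U\defeq\{(x,y)\in X\times X: x=y\ \text{ or }\ \exists h\in U\ (x=hy)\}\subseteq X\times X.$$
Here $x=hy$ makes sense because $h\in H\subseteq X$ and $hy=yh$. The family $\{E_U:U\in\mathcal N\}$ (together with the diagonal) generates a uniformity $\U$ on $X$: symmetry of $E_U$ follows from symmetry of $U$ (if $x=hy$ with $h\in U$ then $y=h^{-1}x$ with $h^{-1}\in U$), and the composition condition $E_V\circ E_V\subseteq E_U$ follows from choosing $V\in\mathcal N$ with $VV\subseteq U$. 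One checks that the subspace uniformity induced on $H$ coincides with the two-sided (=left=right, since $H$ is a group) uniformity of $(H,\tau_H)$, so $H$ embeds as a uniform, hence topological, subgroup; and the induced topology on $X$ restricted to each coset $hX$... more precisely, the uniformity makes $X$ into a topological space in which $H$ is an open subgroup-like piece. In fact each point $x\in X\setminus H$ is isolated unless it is of the form $hy$, so one should be slightly careful: the natural thing is that $X$ decomposes into $\U$-clopen pieces which are the orbits $Hx=\{hx:h\in H\}$, each carrying a topology that is a quotient of $\tau_H$.

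\medskip

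\noindent\textbf{Uniform continuity of multiplication.} Next I would show the semigroup operation $m:X\times X\to X$ is uniformly continuous with respect to $\U\times\U$ and $\U$, so that it extends continuously to the completion $\overline X$ of $(X,\U)$. Given $U\in\mathcal N$, pick $V\in\mathcal N$ with $VV\subseteq U$; I claim that if $(x,x')\in E_V$ and $(y,y')\in E_V$ then $(xy,x'y')\in E_U$. Indeed, writing $x=hx'$ and $y=ky'$ with $h,k\in V$ (the cases where one coordinate is equal are easier), centrality of $H$ gives $xy=hx'ky'=hk\,x'y'$ with $hk\in VV\subseteq U$, so $(xy,x'y')\in E_U$. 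This is exactly where the hypothesis $H\subseteq Z(X)$ is used in an essential way — without centrality one cannot move $h$ past $x'$ and $k$. Associativity passes to $\overline X$ automatically by continuity and density of $X$, and uniqueness of the extended operation follows from density of $X$ in $\overline X$ together with Hausdorffness of $\overline X$ (which holds because $(H,\tau_H)$ is Hausdorff, making $\U$ a separating uniformity on $X$).

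\medskip

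\noindent\textbf{The remaining verifications.} Finally I would check that $X$ sits inside $\overline X$ as a subsemigroup (immediate, since the inclusion $X\hookrightarrow\overline X$ is a uniform embedding and the operation on $\overline X$ restricts to that of $X$ by construction), and that $(H,\tau_H)$ sits inside $\overline X$ as a topological subgroup: $H$ is a subsemigroup of $X\subseteq\overline X$, it is a group algebraically, and its subspace topology from $\overline X$ equals its subspace topology from $(X,\U)$, which we already identified with $\tau_H$; continuity of inversion on $H$ is part of $(H,\tau_H)$ being a topological group. The main obstacle, as flagged above, is getting the uniformity $\U$ right so that it simultaneously (a) is a genuine uniformity (the composition/triangle axiom), (b) induces exactly $\tau_H$ on $H$ and not something coarser or finer, and (c) makes multiplication uniformly continuous; all three hinge on the interplay between the neighborhood base $\mathcal N$ of $e$ in $H$ and the centrality of $H$, and the estimates $VV\subseteq U$ must be chosen to serve both the uniformity axioms and the continuity of $m$ at once. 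Everything else is a routine consequence of the standard theory of completions of uniform spaces (see \cite[\S8]{Eng}) and density arguments.
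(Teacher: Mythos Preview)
Your approach is the same as the paper's, but there is one genuine slip in the construction of the uniformity. You claim that $E_U$ is symmetric because ``if $x=hy$ with $h\in U$ then $y=h^{-1}x$ with $h^{-1}\in U$.'' That inference is valid in a group but not in a semigroup: from $x=hy$ you only obtain $h^{-1}x=h^{-1}hy=ey$, which equals $y$ precisely when $ey=y$. Since $e$ is merely the identity of the subgroup $H$, not of $X$, there may well be elements $y\in X$ with $ey\ne y$; for such $y$ and any $h\in U$ one has $(hy,y)\in E_U$ but $(y,hy)\notin E_U$. Thus $E_U$ is not symmetric and $\{E_U:U\in\mathcal N\}$ is not a base for a uniformity, so you cannot yet invoke the completion machinery from \cite[\S8]{Eng}.

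The paper sidesteps this by taking the manifestly symmetric entourages
$$W_U=\{(x,y):x=y\}\cup\bigcup_{z\in X}(Uz\times Uz),$$
and then verifying that $B(x;W_U)=\{x\}$ when $ex\ne x$ while $B(x;W_U)=UU^{-1}x$ when $ex=x$; so points outside $eX$ are isolated and on $eX$ one sees exactly the $H$-translate structure you intended. Your uniform-continuity computation $xy=(hk)\,x'y'$ via centrality is correct and matches the paper's; only the entourage needs repair. An equivalent fix on your side is simply to replace each $E_U$ by $E_U\cap E_U^{-1}$, which one checks generates the same uniformity as the family $\{W_U\}$. Once this is done, the rest of your outline (extension to $\overline X$, associativity by density, and identification of the subspace topology on $H$ with $\tau_H$) goes through exactly as in the paper.
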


\begin{proof} Let $\U$ be the uniformity on $X$ generated by the base consisting of the entourages
$$W_U=\{(x,y)\in X\times X:x=y\}\cup \bigcup_{z\in X}(Uz\times Uz)$$where $U=U^{-1}\in\tau_H$ is a neighborhood of the idempotent $e$ of the group $H$.

For every neighborhood $U=U^{-1}\in\tau_H$ of $e$ and every $x\in X$, consider the ball $$B(x;W_U)\defeq\{y\in X:(x,y)\in W_U\}$$ of radius $W_U$. We claim that $B(x;W_U)=\{x\}$ for any $x\in X$ such that $x\ne ex$. Indeed, in the opposite case, we could choose an element $y\in B(x;W_U)\setminus\{x\}$ and find $z\in X$ such that $(x,y)\in Uz{\times}Uz$. Then $x=uz$ for some $u\in U\subseteq H$ and hence $ex=euz=uz=x$, which contradicts our assumption.

Fix any $x\in X$ satisfying $x=ex$. Let us show that $B(x;W_U)=UU^{-1}x$ for any symmetric open neighborhood $U\in \tau_H$ of $e$. For any $y\in UU^{-1}x$ we can find $u,v\in U$ such that $y=uv^{-1}x$ and conclude that for $z\defeq v^{-1}x$ we have $x=ex=vv^{-1}x=vz$ and hence $(x,y)=(vz,uz)\in Uz\times Uz\subseteq W_U$, which implies that $y\in  B(x;W_U)$. On the other hand, for every $y\in B(x;W_U)\setminus\{x\}$ there exists $z\in X$ such that $(x,y)\in Uz\times Uz$. It follows that there exists $p\in U$ such that $x=pz$. Then $p^{-1}x=ez$, witnessing that $ez\in U^{-1}x$. Hence $y\in Uz=Uez\subseteq UU^{-1}x$. Thus, $B(x;W_U)=UU^{-1}x$.

Let $\overline X$ be the  completion of the uniform space $(X,\U)$. Let us check that the semigroup operation $\cdot:X\times X\to X$, $\cdot:(x,y)\mapsto xy$, is uniformly continuous with respect to the product uniformity on $X\times X$. Indeed, given any neighborhood $U=U^{-1}\in\tau_H$ of $e$ we can find a neighborhood $V=V^{-1}\in\tau_{H}$ of $e$ such that $VV\subseteq U$. Let us show that
$$B(x;W_V)\cdot B(y;W_V)\subseteq B(xy;W_{VV})\subseteq B(xy;W_U)$$ for every $x,y\in X$. For this fix any $t\in B(x;W_V)$ and $p\in B(y;W_V)$. Assume that $t\neq x$ and $p\neq y$.  Then there exist $z_1,z_2\in X$ and $v_1,v_2,v_3,v_4\in V$ such that $x=v_1z_1$, $t=v_2z_1$, $y=v_3z_2$ and $p=v_4z_2$. Since the group $H$ is contained in the center of $X$ we get that $xy=(v_1v_3)(z_1z_2)$ and $tp=(v_2v_4)(z_1z_2)$. Since $v_1v_3\in VV$ and $v_2v_4\in VV$ we obtain that $tp\in B(xy;W_{VV})\subseteq B(xy;W_U)$, by the choice of $V$. 

The three other cases: $t=x$ and $p\neq y$; $t\neq x$ and $p= y$; $t=x$ and $p=y$ can be treated similarly.

By \cite[8.3.10]{Eng}, the uniformly continuous map $\cdot:X\times X\to X$ can be extended to a uniformly continuous map $\bar\cdot:\overline X \times\overline X\to\overline X$.  The density of $X$ in $\overline X$ implies that the binary operation $\bar\cdot$ is associative, witnessing that $\overline X$ is a topological semigroup. The definition of the uniformity $\U$ ensures that the topology induced by $\U$ on $H$ coincides with the topology $\tau_{H}$.
\end{proof}

The following lemma implies Theorem~\ref{t:main}(1--3).

\begin{lemma}\label{l:groups} Let $X$ be a semigroup.
\begin{enumerate}
\item If $X$ is injectively $\mathsf{T_{\!z}S}$-closed or $\mathsf{T_{\!z}S}$-discrete, then $Z(X)$ is group-finite.
\item If $X$ is ideally  $\mathsf{T_{\!z}S}$-closed, then $Z(X)$ is group-bounded.
\end{enumerate}
\end{lemma}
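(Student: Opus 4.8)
The plan is to argue by contraposition in both cases, producing a topological semigroup $Y\in\mathsf{T_{\!z}S}$ containing $X$ as a subsemigroup (discretely, and in a non-closed fashion) from the failure of the desired property of $Z(X)$. The engine for this is Lemma~\ref{l:uniform}: given a subgroup $H\subseteq Z(X)$ and a group topology $\tau_H$ on $H$, we get a uniformity $\U$ on $X$ whose completion $\overline X$ is a topological semigroup with $X$ as a subsemigroup and $(H,\tau_H)$ as a topological subgroup. I must check that $\overline X$ can be taken zero-dimensional and Tychonoff; this should follow by choosing $\tau_H$ to be zero-dimensional (e.g., coming from a homomorphism into $\IT$ composed with something totally disconnected, or directly) and noting that the entourages $W_U$ generate a zero-dimensional uniformity, since each $W_U$ is ``clopen'' in the relevant sense and the balls $B(x;W_U)$ are clopen in $\overline X$; the completion of a zero-dimensional uniform space is zero-dimensional. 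Equivalently, and perhaps more cleanly, I would factor through the group $\IT$, which is not zero-dimensional, so instead I would use a suitable zero-dimensional compact (or complete) group topology — the point is that any infinite subgroup of $\IT$ pulls back, so I should arrange $\tau_H$ to be a zero-dimensional group topology with the appropriate non-discreteness.

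For part (2), suppose $Z(X)$ is not group-bounded: there is a subgroup $H\subseteq Z(X)$ that is unbounded. By Lemma~\ref{l:5.10} there is a homomorphism $f:H\to\IT$ with infinite image. Since $\IT$ is not zero-dimensional I cannot use it directly, so the key reduction is to replace $f[H]$ by an infinite quotient admitting a non-discrete zero-dimensional group topology; concretely, an infinite subgroup of $\IT$ either contains elements of unbounded order (and then surjects onto $\mathbb Z$ or onto a Prüfer group $\mathbb Z(p^\infty)$, the latter carrying a natural compact zero-dimensional topology as $\mathbb Z_p/\mathbb Z_p$... ) or is infinite torsion. In all cases I can find a homomorphism $g:H\to K$ onto an infinite group $K$ that carries a non-discrete zero-dimensional Hausdorff group topology $\tau_K$. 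Pull $\tau_K$ back to a (possibly non-Hausdorff) group topology, or rather work with $H/\ker g$: apply Lemma~\ref{l:uniform} to the subgroup... — here there is a subtlety, since $\ker g$ need not be ``removable'' inside $X$. The honest route: endow $H$ with the initial topology $\tau_H$ induced by $g:H\to(K,\tau_K)$; this is a (not necessarily Hausdorff) group topology, but Lemma~\ref{l:uniform} as stated wants a \emph{group topology} (Hausdorff). So instead I should take a subgroup on which $g$ is injective, or — better — quotient $X$ by the ideal generated by $\ker g$ if that helps; this is precisely why the ``ideally closed'' hypothesis appears. The cleanest formulation: let $N=\ker g$, note $N$ is a subgroup of $Z(X)$, consider the Rees-type or congruence quotient making $N$ collapse, land in an ideally-closed situation, and then on the quotient apply Lemma~\ref{l:uniform} with the now-Hausdorff zero-dimensional topology. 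Since $X$ is ideally $\mathsf{T_{\!z}S}$-closed, each such quotient is $\mathsf{T_{\!z}S}$-closed, so $X$ (or its quotient) is closed in $\overline X$; but $H$ is not closed in $\overline X$ because its topology $\tau_H$ is non-discrete and $\overline X$ is the completion, so $\overline H\supsetneq H$ — contradiction, provided the quotient map does not already kill the phenomenon. Handling this bookkeeping carefully is the crux.

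For part (1): if $Z(X)$ is not group-finite, there is an infinite subgroup $H\subseteq Z(X)$. If $H$ is unbounded, part (2)'s construction already shows $X$ is not even $\mathsf{T_{\!z}S}$-closed, hence not injectively $\mathsf{T_{\!z}S}$-closed and (via Theorem of Banakh--Bardyla, the one reducing injective closedness to closedness plus discreteness) relates to $\mathsf{T_{\!z}S}$-discreteness. If $H$ is bounded but infinite, then $H$ is an infinite bounded abelian group, hence contains an infinite direct sum $\bigoplus_{\w}\mathbb Z(p)$ for some prime $p$, which admits the non-discrete product-induced zero-dimensional compact group topology (inside $\prod_\w\mathbb Z(p)$). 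Apply Lemma~\ref{l:uniform} with this $\tau_H$: the completion $\overline X$ is a zero-dimensional Hausdorff (indeed Tychonoff) topological semigroup in which $X$ is \emph{not discrete} (since $H$ acquired a non-discrete topology) — this contradicts $\mathsf{T_{\!z}S}$-discreteness; and if moreover $H$ is not closed in its completion, it contradicts injective $\mathsf{T_{\!z}S}$-closedness. For a \emph{compact} $\tau_H$ one has $\overline H=H$ already (compact implies complete), so to defeat injective closedness I instead pick $\tau_H$ non-complete, e.g. the topology on $\bigoplus_\w\mathbb Z(p)$ induced from $\prod_\w\mathbb Z(p)$, which is non-discrete and whose completion is the full product, strictly larger; then $X$ is a non-closed subsemigroup of $\overline X\in\mathsf{T_{\!z}S}$ via an injective continuous homomorphism (the identity on $X$ is an isomorphic embedding, even), contradicting injective $\mathsf{T_{\!z}S}$-closedness. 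For $\mathsf{T_{\!z}S}$-discreteness, non-discreteness of the embedding already suffices, completeness of $\tau_H$ notwithstanding. Thus both sub-statements of (1) follow.

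The main obstacle I anticipate is \emph{not} the topological-algebra manipulation but the interface between the \emph{group-theoretic} reduction (from ``unbounded/infinite subgroup of $Z(X)$'' to ``a subgroup carrying a non-discrete zero-dimensional group topology whose completion is strictly larger'') and the \emph{semigroup-level} requirement that Lemma~\ref{l:uniform} needs a \emph{Hausdorff} group topology $\tau_H$, whereas the natural maps into $\IT$ have kernels. Resolving this — either by always finding an appropriate \emph{injective} homomorphism to a zero-dimensional group (using the structure theorem for bounded abelian groups for part (1), and a Prüfer-group or $\mathbb Z$-quotient argument together with the ideal-quotient in $X$ for part (2)) — is where the real content lies; everything else is an application of Lemma~\ref{l:uniform} and the definitions of the various closedness notions, together with Lemma~\ref{l:5.10} and Lemma~\ref{l:Baer}.
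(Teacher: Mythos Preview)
Your overall framework---use Lemma~\ref{l:uniform} to topologize $X$ via a subgroup $H\subseteq Z(X)$ and then exhibit a non-closed/non-discrete embedding---matches the paper, but you miss the two tricks that make the argument go through cleanly, and you misidentify the role of the ideal quotient in part~(2).

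\textbf{The Hausdorff and zero-dimensionality issues.} You spend most of your effort wrestling with the kernel of a single homomorphism $H\to\IT$ and with the non-zero-dimensionality of $\IT$. The paper sidesteps both at once: take $H$ \emph{countable} and infinite, and use the Baer diagonal $\delta:H\to\IT^{\widehat H}$, which is \emph{injective} by Lemma~\ref{l:Baer}. This gives a Hausdorff group topology $\tau_H$ immediately---no kernel to collapse. Zero-dimensionality is then obtained not from the target but from cardinality: after completing to $\overline X$, pick one point $a\in\overline H\setminus H$ (which exists because a countably infinite subgroup of a compact group is never closed or discrete) and let $Y$ be the subsemigroup generated by $X\cup\{a\}$. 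Each $\overline H$-coset meets $Y$ in a countable set, so $Y$ is locally countable and hence zero-dimensional. This single construction handles all of part~(1), with no case split on bounded versus unbounded.

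\textbf{The ideal in part~(2).} Your proposal to ``quotient $X$ by the ideal generated by $\ker g$'' does not make sense: $\ker g$ is a subgroup of $Z(X)$, not an ideal of $X$, and there is no natural congruence on $X$ collapsing exactly $\ker g$. The paper's use of the ideally closed hypothesis is entirely different. Having already embedded $X$ in $Y$ as above, one uses Lemma~\ref{l:5.10} together with the periodicity of $Z(X)$ (Theorem~\ref{t:center}, applicable since ideally $\mathsf{T_{\!z}S}$-closed implies $\mathsf{T_{\!z}S}$-closed) to choose $a\in\overline H$ of \emph{infinite order}; then $a^{\IN}\cap H=\emptyset$. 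The relevant ideal is $I=\{x\in\overline X:e\notin\overline X^1x\overline X^1\}$, and the key combinatorial step is to verify that $J\defeq(Y\setminus X)\cup(I\cap Y)$ is an ideal in $Y$---this is where the infinite-order condition on $a$ is used. One then checks that $X/(X\cap I)$ sits as a discrete non-closed subsemigroup of the zero-dimensional quotient $Y/(Y\cap I)$, contradicting ideal $\mathsf{T_{\!z}S}$-closedness. Finally, your remark that ``part~(2)'s construction already shows $X$ is not even $\mathsf{T_{\!z}S}$-closed'' is incorrect: whether an unbounded central subgroup forces failure of plain $\mathsf{T_{\!z}S}$-closedness is exactly the open Problem~\ref{prob:main2}.
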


\begin{proof} If $Z(X)$ is not group-finite (and not group-bounded), then $Z(X)$ contains a countable infinite subgroup $H$ (which is not bounded).

Let $\widehat H$ be the set of all homomorphisms from $H$ to the group $\IT\defeq\{z\in\IC:|z|=1\}$.
By Lemma~\ref{l:Baer}, the diagonal homomorphism $\delta:H\to\IT^{\widehat H}$, $\delta:z\mapsto (\chi(z))_{\chi\in\widehat H}$, is injective. Let $K$ be the closure of the subgroup $\delta[H]$ in the compact topological group $\IT^{\widehat H}$.

Let $\tau_H$ be the unique topology on $H$ such that the injective homomorhism $\delta:H\to K$ is a topological embedding. Let $\U$ be the uniformity on $X$ generated by the base consisting of the entourages
$$W_U=\{(x,y)\in X\times X:x=y\}\cup \bigcup_{z\in X}(U\!z\times U\!z)$$where $U=U^{-1}\in\tau_H$ is a neighborhood of the idempotent $e$ of the group $H$.
In the proof of Lemma~\ref{l:uniform} we have shown that for every $x\in X$ the ball $B(x;W_U)\defeq\{y\in X:(x,y)\in W_U\}$ of radius $W_U$ equals $\{x\}$ if $x\ne ex$ and $B(x;W_U)=UU^{-1}x$ if $x=xe$. Let $\tau_X$ be the topology on $X$ generated by the uniformity $\U$. The topology $\tau_X$ consists of all subsets $V\subseteq X$ such that for every $x\in V$ there exists a neighborhood $U=U^{-1}\in\tau_H$ of $e$ such that $B(x;W_U)\subseteq V$.

By Lemma~\ref{l:uniform}, the completion $\overline X$ of the uniform space $(X,\U)$ carries the structure of a topological semigroup containing $X$ as a dense subsemigroup and $(H,\tau_H)$ as a topological subgroup.  Since $H$ is a clopen  subgroup of $(X,\tau_X)$, the closure $\overline H$ of $H$ in $\overline{X}$ is topologically isomorphic to $K$ and hence is a compact topological group.

Observe that for every $h\in H$ the closed subset $Z_h\defeq \{x\in\overline X:xh=hx\}$ of $\overline X$ contains $X$ and hence $Z_h=\overline X$. Then for every $x\in\overline X$ the closed subset $Z_x\defeq\{y\in \overline X:xy=yx\}$ of $\overline X$ contains $H$ and hence contains the closure of $H$ in $\overline X$. Therefore, $\overline H\subseteq Z(\overline X)$.

Being countable and infinite, the subgroup $\delta[H]$ is not discrete and not closed in the compact topological group $K$ and hence $H$ is not discrete and not closed in $\overline X$. Then there exists an element $a\in\overline H\setminus H\subseteq \overline X\setminus X$. Let $Y$ be the subsemigroup of $\overline X$, generated by the set $X\cup\{a\}$. Since $a\in\overline H\subseteq Z(\overline X)$,  the semigroup $Y$ coincides with the set $\{xa^n:x\in X,\;n\ge 0\}$, where we assume that $xa^n=x$ if $n=0$.

Observe that the space $Y$ is Tychonoff being a subspace of the uniform (and thus Tychonoff) space $\overline X$. For every $x\in X$ with $x\ne xe$, the singleton $\{x\}$ is clopen in $(X,\tau_X)$ and remains clopen in $\overline X$. On the other hand, for every $x=xe\in X$ the subspace $xH$ is clopen in $X$ and $x\overline{H}$ is clopen in $\overline{X}$. Since the set $Y\cap x\overline{H}=\{xya^n:y\in H,\;n\ge 0\}$ is countable, the Tychonoff space $Y$ is locally countable and hence zero-dimensional. Since the subgroup $H$ of $X$ is not discrete, the semigroup $X$ is $\mathsf{T_{\!z}S}$-topologizable. Since the identity homomorphism $X\to Y\in\mathsf{T_{\!z}S}$ is injective and has non-closed image in $Y$, the semigroup $X$ is not injectively $\mathsf{T_{\!z}S}$-closed. These arguments complete the proof of statement (1).
\smallskip

To prove statement (2), assuming that the subgroup $H$ is not bounded, we will show that the semigroup $X$ is not ideally $\mathsf{T_{\!z}S}$-closed. To derive a contradiction, assume that $X$ is ideally $\mathsf{T_{\!z}S}$-closed. By Theorem~\ref{t:center}, the semigroup $Z(X)$ is periodic and so is the group $H\subseteq Z(X)$. By Lemma~\ref{l:5.10}, there exists a homomorphism $h:H\to \IT$ whose image $h[H]$ is infinite and hence dense in $\IT$. Consider the projection $\pr_h:K\to \IT$, $\pr_h:y\mapsto y(h)$, of $K$ onto the $h$-th factor and observe that $\pr_h[K]$ is a compact subgroup of $\IT$ that contains the dense subgroup $h[H]$ of $\IT$. The compactness of $K$ ensures that $\pr_h[K]=\IT$ and hence $K$ contains an element  of infinite order. Since $K$ is topologically isomorphic to the closure $\overline H$ of $H$ in $\overline{X}$, we can find an element $a\in\overline{H}\setminus H\subseteq\overline{X}\setminus X$ of infinite order. Then the monogenic subsemigroup $a^\IN\defeq\{a^n:n\in\IN\}$  of $\overline H$ is disjoint with the periodic group $H$.

In the semigroup $\overline{X}$ consider the ideal $I\defeq\{x\in\overline X:e\notin \overline X^1x\overline X^1\}$. We claim that the ideal $I$ is clopen in $\overline X$. To see that $I$ is closed in $\overline X$, observe that for every $x\notin I$ we have $e=pxq$ for some $p,q\in\overline X^1$. Then for every $h\in \overline H\subseteq Z(\overline X)$ we have $e=ee=pxqe=pxeq=pxhh^{-1}q\in \overline X^1(xh)\overline X^1$ and hence $xh\notin I$. Then $x\overline H$ is a neighborhood of $x$ in $\overline{X}$ that misses the set $I$ and witnesses that the ideal $I$ is closed in $\overline X$. On the other hand, for every $x\in I$ we have $x\overline H\subseteq I$ as $I$ is an ideal in $\overline X$. Since $x\overline H$ is a neighborhood of $x$ in $\overline X$, the ideal $I$ is open in $\overline X$.

In the subsemigroup $Y=\{xa^n:x\in X,\;n\ge 0\}$ of the topological semigroup $\overline X$, consider the set $J\defeq(Y\setminus X)\cup (I\cap Y)$. We claim that  $J$ is an ideal in $Y$.
In the opposite case we can find elements $j\in J$ and $y\in Y$ such that $jy\notin J$ or $yj\notin J$. First assume that $jy\notin J$.  It follows from $j\in J=(Y\setminus X)\cup (I\cap Y)$ and $jy\notin J$ that $jy\notin I$ and $j\notin I$, as $I$ is an ideal. Then $j\in Y\setminus X$ but $jy\in X$. It follows from $j\in Y\setminus X$ that $j=xa^n$ for some $x\in X$ and $n\in \IN$. Also the definition of the semigroup $Y\ni y$ ensures that $y=x_ya^m$ for some $x_y\in X$ and $m\ge 0$ (where we assume that $x_ya^m=x_y$ if $m=0$).

 It follows from $jy\notin I$ that $e=\bar pjy\bar q$ for some $\bar p,\bar q\in \overline X^1$. Since the set $\overline{H}$ is a clopen neighborhood of $e$ in $\overline{X}$, there are neighborhoods $O_{\bar p},O_{\bar q}$ of the elements $\bar p,\bar q$ in the topological semigroup $\overline{X}$ such that $O_{\bar p}jyO_{\bar q}\subseteq \overline H$.  Choose any $p\in X\cap O_{\bar p}$ and $q\in X\cap O_{\bar q}$. Since $jy\in X$, we get $pjyq\in X\cap O_{\bar p}jyO_{\bar q}\subseteq X\cap \overline H=H$. On the other hand, $pjyq=pxa^nx_ya^mq=pxx_yqa^{n+m}$ and hence $pxx_yqe=(pjyq)a^{-(n+m)}\in X\cap\overline H=H$ and finally $pjyq=(pxx_yqe)a^{n+m}\notin H$ because $n+m\in\IN$ and $a^\IN\cap H=\emptyset$. By analogy we can derive a contradiction assuming that $yj\notin J$. These contradictions show that $J$ is an ideal in $Y$.

Let $\tau_Y$ be the topology of the semigroup $Y$ inherited from the topological semigroup $\overline X$. Define a stronger topology $\tau_Y'$ on $Y$ as follows: A subset $U\subseteq Y$ is open in $(Y,\tau_Y')$ if an only if for every $y\in J\cap U$ there exists an open neighborhood $V\in\tau_Y$ of $y$ such that $V\subseteq U$. The definition of $\tau_Y'$ implies that $Y\setminus J=X\setminus I$ is an open discrete subspace of $(Y,\tau_Y')$. Taking into account that $J$ is an ideal in $Y$, it can be shown that $(Y,\tau_Y')$ is a zero-dimensional topological semigroup containing $X$ as a non-closed subsemigroup. Endow the quotient semigroup $Y/(Y\cap I)$ with the strongest topology $\tau$ in which the quotient homomorphism $q:(Y,\tau_Y')\to Y/(Y\cap I)$ is continuous. Taking into account that the ideal $I\cap Y$ is clopen in the zero-dimensional topological semigroup $(Y,\tau_Y')$, we conclude that $(Y/(Y\cap I),\tau)$ is a zero-dimensional topological semigroup containing the quotient semigroup $X/(X\cap I)$ as a discrete non-closed subsemigroup.  Hence the semigroup $X$ is not ideally $\mathsf{T_{\!z}S}$-closed.
\end{proof}

\section{Proof of Theorem~\ref{t:main}(4)}\label{s:4}

Given a $\mathsf{T_{\!z}S}$-closed semigroup $X$, we will prove that its ideal center $I\!Z(X)$ is group-bounded. 

By Theorem~\ref{t:center}, the semigroup $Z(X)$ is chain-finite, periodic and nonsingular. To derive a contradiction, assume that some subgroup of the ideal center $I\!Z(X)$ is unbounded. Observe that for every idempotent $e\in I\!Z(X)$, the maximal subgroup $H_e=H_ee$ of $X$ is contained in $I\!Z(X)$. Therefore, $H_e$ is a maximal subgroup of the semigroup $I\!Z(X)$. Since the semigroup $Z(X)$ is chain-finite, the partially ordered set $E(X)\cap Z(X)$ is well-founded, i.e., each nonempty subset of $E(X)\cap Z(X)$ contains a minimal element. 
Using this fact we can find an idempotent $e\in I\!Z(X)$ such that the maximal subgroup $H_e$ is unbounded but for any idempotent $f\in E(I\!Z(X))$ with $f<e$ the maximal subgroup $H_f$ is bounded.

By Lemma~\ref{l:eiz=>v}, the idempotent $e$ is viable and hence the set $X\setminus\frac{H_e}e$ is an ideal in $X$.

\begin{claim}\label{cl:quot} For any $a\in X\setminus\frac{H_e}e$, the set $G_a=\{x\in H_e: ax=ae\}$ is a subgroup of $H_e$ such that the quotient group $H_e/G_a$ is bounded.
\end{claim}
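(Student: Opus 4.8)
The plan is to prove the two assertions in turn. Showing that $G_a$ is a subgroup of $H_e$ is a direct computation: clearly $e\in G_a$; if $x,y\in G_a$ then $a(xy)=(ax)y=(ae)y=a(ey)=ay=ae$ (using $ey=y$, since $y\in H_e$), so $xy\in G_a$; and if $x\in G_a$ then $ax^{-1}=a(ex^{-1})=(ae)x^{-1}=(ax)x^{-1}=a(xx^{-1})=ae$, so $x^{-1}\in G_a$. As $H_e\subseteq I\!Z(X)\subseteq Z(X)$ is commutative, $G_a$ is normal in $H_e$ and $H_e/G_a$ is a commutative group; it remains to show it is bounded, and this will follow, via the idempotent power of $ae$, from the nonsingularity of $Z(X)$ together with the minimality in the choice of $e$.

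The next step is to bring in the idempotent produced by $ae$. Since $e\in I\!Z(X)$, the element $c:=ae=ea$ lies in $eX\subseteq Z(X)$, and $Z(X)$ is periodic by Theorem~\ref{t:center}, so $c^n\in E(X)$ for some $n\in\IN$; put $f:=c^n\in eX\subseteq I\!Z(X)$. A short computation gives $ef=fe=f$, so $f\le e$; and $f\ne e$, since $c^n=e$ would make $c=ae$ invertible with respect to $e$, i.e. $ae\in H_e$, contradicting $a\notin\tfrac{H_e}e$. Hence $f<e$, so by the choice of $e$ the group $H_f$ is bounded, of exponent $M$ say. The map $\psi\colon H_e\to H_f$, $\psi(x)=fx$, is a homomorphism with image $fH_e\le H_f$, so $H_e/\ker\psi\cong fH_e$ is bounded (with exponent dividing $M$), and $\ker\psi=\{x\in H_e:c^nx=c^n\}$ because $c^n=f$.

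The heart of the argument is to bound the increasing chain of subgroups $G^{(k)}:=\{x\in H_e:c^kx=c^k\}$, $k\ge 1$, which satisfies $G^{(1)}=G_a$ and $G^{(n)}=\ker\psi$. I claim $[G^{(k+1)}:G^{(k)}]$ is finite for every $k$. If not, choose an infinite set $A'\subseteq G^{(k+1)}$ whose elements are pairwise incongruent modulo $G^{(k)}$ and put $A:=c^kA'\subseteq Z(X)$. Then $A$ is infinite, since $c^ku=c^kv$ implies $uv^{-1}\in G^{(k)}$; on the other hand $A'A'\subseteq G^{(k+1)}$, and $c^{k+1}x=c^{k+1}$ implies $c^{2k}x=c^{2k}$ (multiply by a suitable power of $c$), whence $AA=c^{2k}(A'A')=\{c^{2k}\}$ is a singleton, contradicting the nonsingularity of $Z(X)$ from Theorem~\ref{t:center}. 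Therefore $\ker\psi/G_a=G^{(n)}/G^{(1)}$ is finite, and hence $H_e/G_a$, being an extension of the bounded group $H_e/\ker\psi\cong fH_e$ by the finite group $G^{(n)}/G_a$, has exponent dividing $M\cdot|G^{(n)}/G_a|$, i.e. is bounded.

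The only delicate step I anticipate is the nonsingularity argument in the last paragraph: producing the infinite set $A=c^kA'$ with $AA$ a singleton and verifying the inclusions $A'A'\subseteq G^{(k+1)}\subseteq\{x\in H_e:c^{2k}x=c^{2k}\}$ (the latter needing the small routine identities $c^ke=c^k$ and $c^{k+1}x=c^{k+1}\Rightarrow c^{2k}x=c^{2k}$). Once nonsingularity is invoked in this form, the reduction of boundedness of $H_e/G_a$ to boundedness of $fH_e$ and finiteness of $G^{(n)}/G_a$ is purely formal.
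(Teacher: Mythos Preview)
Your proof is correct and follows essentially the same strategy as the paper: both locate the idempotent $f=(ae)^n<e$ in $I\!Z(X)$, invoke the minimality of $e$ to get $H_f$ bounded, and then use the nonsingularity of $Z(X)$ to control the chain of stabilizers $G_{a^k}=\{x\in H_e:a^kx=a^ke\}$ (your $G^{(k)}$).

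The only difference is organizational. The paper introduces the auxiliary subgroup $G=\{x^p:x\in H_e\}$ (with $p$ the exponent of $H_f$), observes $G\subseteq G_{a^n}$, and then argues by choosing the \emph{smallest} $k$ with $[G:G\cap G_{a^k}]<\infty$, deriving a contradiction from nonsingularity if $k>1$. You instead work directly with the full chain $G^{(1)}\subseteq\cdots\subseteq G^{(n)}=\ker\psi$ and show \emph{every} successive index $[G^{(k+1)}:G^{(k)}]$ is finite via the same nonsingularity trick, then combine this with the boundedness of $H_e/\ker\psi\cong fH_e\le H_f$. Your packaging is marginally cleaner in that it avoids the auxiliary $G$ and the minimal-$k$ device; the paper's version, on the other hand, only needs to run the nonsingularity argument once rather than $n-1$ times. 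Either way the content is the same.
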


\begin{proof} Observe that for any $x,y\in G_a$, $axy=aey=aye=aee=ae$. Hence   $G_a$ is a subsemigroup of $H_e$. Since the group $H_e\subseteq Z(X)$ is periodic, the subsemigroup $G_a$ of $H_e$ is a subgroup of $H_e$. It remains to prove that the quotient group $H_e/G_a$ is bounded. Since $e\in I\!Z(X)$, the element $ae$ belongs to the ideal center $I\!Z(X)$. Since $Z(X)$ is periodic, there exists $n\in\IN$ such that $f\defeq(ae)^n=a^ne$ is an idempotent of the semigroup $I\!Z(X)$. It is clear that $ef=fe=f$ and hence $f\le e$. Assuming that $f=e$, we conclude that $ae\in \korin{\IN}{\!H_f}=\korin{\IN}{\!H_e}$ and hence $ae=aee\in\korin{\IN}{\!H_e}H_e\subseteq H_e$ by Lemma~\ref{l:C-ideal}. But the inclusion $ae\in H_e$ contradicts $a\notin\frac{H_e}e$.
This contradiction shows that $f<e$. By the minimality of $e$, the maximal group $H_f$ is bounded. So, there exists $p\in\IN$ such that $y^p=f$ for all $y\in H_f$.

In the group $H_e$ consider the subgroup $G=\{x^p:x\in H_e\}$. By Lemma~\ref{l:H-commute}, $fH_e\subseteq H_f$. For every $x\in H_e$ we have $fx^p=(fx)^p=f$ by the choice of $p$. Then $fG=\{f\}$ and $yG=(yf)G=y(fG)=yf=y$ for every $y\in H_f$. Since $a^ne=(ae)^n\in H_f$, we have $a^nG=a^n(eG)=(a^ne)G=\{a^ne\}$ and hence $G\subseteq G_{a^n}\defeq\{x\in H_e:a^nx=a^ne\}$.

Let $k\le n$ be the smallest number such that the subgroup $G\cap G_{a^k}$ has finite index in $G$.

If $k>1$, then the subgroup $G\cap  G_{a^{k-1}}$ has infinite index in $G$, by the minimality of $k$. Since the group $G\cap G_{a^k}$ has finite index in $G$, the subgroup $G\cap G_{a^{k-1}}$ has infinite index in the group $G\cap G_{a^k}$. So, we can find an infinite set $J\subseteq G\cap G_{a^k}$ such that $x(G\cap G_{a^{k-1}})\cap y(G\cap G_{a^{k-1}})=\emptyset$ for any distinct elements $x, y\in J$.
Observe that for any distinct elements $x, y\in J$ we have $a^kx =a^ke =a^ky$ and $a^{k-1}x\ne a^{k-1}y$ (assuming that $a^{k-1}x =a^{k-1}y$, we obtain that $a^{k-1}e =a^{k-1}xx^{-1}=a^{k-1}yx^{-1}$ and hence $yx^{-1}\in G\cap G_{a^{k-1}}$ which contradicts the choice of the set $J$).
Then the set $A=a^{k-1}J\subseteq I\!Z(X)$ is infinite. We claim that $AA$ is a singleton. Indeed, for any $x, y\in J$ we have $$a^{k-1}xa^{k-1}y=a^kxa^{k-2}y=a^kea^{k-2}y=a^{k-2}ea^ky=e^{k-2}ea^ke =a^{2k-2}e.$$ Therefore, $AA =\{a^{2k-2}e\}$. But the existence of such set $A\subseteq I\!Z(X)\subseteq Z(X)$ contradicts the nonsingularity of the semigroup $Z(X)$. This contradiction shows that $k=1$ and hence the subgroup $G\cap G_a$ has finite index in $G$. Then the quotient group $G/(G\cap G_a)$ is finite and bounded. Since the quotient group $H_e/G$ is bounded, the quotient group $H_e/(G \cap G_a)$ is bounded and so is the quotient group $H_e/G_a$.
\end{proof}

Let $$\korin{\IN}{1}\defeq\{z\in\IC:\exists n\in\IN\;\;(z^n=1)\}$$ be the quasi-cyclic group, considered as a dense subgroup of the compact topological group $\IT=\{z\in\IC:|z| =1\}$. Denote by $\widehat{H}_e$ the set of all homomorphisms from $H_e$ to $\IT$. Since $H_e\subseteq I\!Z(X)\subseteq Z(X)$ and the semigroup $Z(X)$ is periodic, the group $H_e$ is  periodic and hence $\varphi[H_e]\subseteq\korin{\IN}{1}$ for any homomorphism $\varphi\in\widehat{H}_e$. By Lemma~\ref{l:Baer}, the
diagonal homomorphism $\delta:H_e\to\IT^{\widehat{H}_e}$,
$\delta:x\mapsto (\varphi(x))_{\varphi\in\widehat{H}_e}$, is
injective.
Identify the group $H_e$ with its image $\delta[H_e]\subseteq
\korin{\IN}{1}^{\widehat{H}_e}$ in the compact topological group $\IT^{\widehat{H}_e}$ and
let $\overline H_e$ be the closure of $H_e$ in $\IT^{\widehat{H}_e}$.

By Lemma~\ref{l:5.10}, there exists a homomorphism
$h:H_e\to\IT$ with infinite image $h[H_e]$. The subgroup
$h[H_e]$, being infinite, is dense in $\IT$. The homomorphism $h$
admits a continuous extension $\bar h:\overline H_e\to\IT$, $\bar
h:(z_\varphi)_{\varphi\in\widehat{H}_e}\mapsto z_h$.
The compactness of $\overline H_e$ and density of $h[H_e]=\bar h[
H_e]$ in $\IT$ imply that $\bar h[\overline H_e]=\IT$.

By Claim~\ref{cl:quot}, for every $a\in X\setminus\frac{H_e}e$ the quotient group $H_e/G_a$ is bounded. So, we can find a
number $n_a\in\IN$ such that $x^{n_a}\in G_a$ for all $x\in H_e$.  Moreover, for any
non-empty finite set $F\subseteq X\setminus\frac{H_e}e$ and the number
$n_F=\prod_{a\in
F}n_a\in\IN$, the intersection $G_F=\bigcap_{a\in F}G_a$ contains
the $n_F$-th power $x^{n_F}$ of any element $x\in H_e$.

Then for every $y\in h[H_e]\subseteq \korin{\IN}{1}$, we get $y^{n_F}\in
h[G_{F}]$, which implies that the subgroup $h[G_{F}]$ is dense in
$\IT$. Let $\overline G_F$ be the closure of $G_F$ in the compact
topological group $\overline H_e$.  The density of the subgroup
$h[G_F]$
in $\IT$ implies that $\bar h[\overline G_F]=\overline{h[G_F]}=\IT$.

 By the compactness, $\bar h[\bigcap_{F\in [X\setminus\frac{H_e}e]^{<\w}}\overline
G_{F}]=\bigcap_{F\in [X\setminus\frac{H_e}e]^{<\w}}\bar h[\overline G_F]=\IT$. So, we can
fix an element $s\in \bigcap_{F\in [X\setminus\frac{H_e}e]^{<\w}}\overline G_F\subseteq\overline
H_e$ whose image $\bar h(s)\in\IT$ has infinite order in the group
$\IT$. Then $s$ also has infinite order and its orbit
$s^\IN$ is disjoint with the periodic group $H_e$.

Consider the subsemigroup $S\subseteq\overline H_e$ generated by
$H_e\cup\{s\}$. Observe that $$\textstyle S=H_e\cup\{gs^n:g\in H_e,\;n\in\IN\}\subseteq
\prod_{\varphi\in\widehat{H}_e}\IQ_\varphi$$ where $\IQ_\varphi$ is the
countable subgroup of $\IT$ generated by the set
$\korin{\IN}{1}\cup\{\varphi(s)\}$.

It is clear that the subspace topology $\tilde\tau$ on $S$, inherited from the topological group
$\prod_{\varphi\in\widehat{H}_e}\IQ_\varphi$ is Tychonoff and
zero-dimensional.
Then the topology $\tau'$ on $S$ generated by the base
$$\big\{U\cap a\overline G_F:U\in\tilde\tau,\;a\in
\overline H_e,
\;F\in\big[X{\setminus}\tfrac{H_e}e\big]^{<\w}\big\}$$ is zero-dimensional, too.
It is easy to see that $(S,\tau')$ is a
topological semigroup and $s$  belongs to the closure of $H_e$ in
the topology $\tau'$. Finally, endow $S$ with the
topology
$\tau=\{U\cup D:U\in\tau',\;D\subseteq H_e\}$. The topology $\tau$
is
well-known in General Topology as the Michael modification of the
topology $\tau'$ (see \cite[5.1.22]{Eng}). Since the (group)
topology $\tau'$ is zero-dimensional, so is its Michael
modification $\tau$ (see \cite[5.1.22]{Eng}). Using the fact that
$S\setminus H_e$ is an ideal in $S$, it can be
shown that $(S,\tau)$ is a zero-dimensional  topological
semigroup, containing $H_e$ as a dense discrete subgroup. From now
on we consider $S$ as a topological semigroup, endowed
with the topology $\tau$.

 Let $Y=S\sqcup
(X\setminus H_e)$ be the topological sum of the topological space $S$ and the
discrete topological space $X\setminus H_e$. It is clear that
$Y$ contains $X$ as a proper dense discrete subspace.

It remains to extend the semigroup operation of $X$ to a
continuous
commutative semigroup operation on $Y$. In fact, for any
$a\in X$, $b\in H_e$ and $n\in\IN$ we should define the product
$a(bs^n)$. By the periodicity of the semigroup $Z(X)$, there is a
number $p\in\IN$ such that $f:=(ae)^p$ is an idempotent. If $fe<e$,
then we put $a(bs^n)=ab$. If $fe=e$, then $f=(ae)^p=(ae)^pe=fe=e$ and hence $ae$ belongs to the semigroup $\korin{\IN}{H_e}$. By Lemma~\ref{l:C-ideal}, $ae=aee\in\korin{\IN}{H_e}H_e\subseteq H_e$. So, we can put
$a(bs^n)=(ae)bs^n$. The choice of
$s\in\bigcap_{F\in[T]^{<\w}}\overline G_F$ guarantees that the
extended
binary operation is continuous. Now the density of $X$ in $Y$ implies that the extended operation is commutative and
associative. Since $Y\in\mathsf{T_{\!z}S}$, the semigroup $X$ is not $\mathsf{T_{\!z}S}$-closed, which is a desired contradiction completing the proof of Theorem~\ref{t:main}(4).

\section{Proof of Theorem~\ref{t:main}(5,6)}\label{s:5-6}

In this section we prove two lemmas implying statements (5) and (6) of Theorem~\ref{t:main}.

\begin{lemma}\label{l:group} Let $\mathsf i\in\{1,2,\mathsf z\}$. If $X$ is an ideally $\mathsf{T_{\!i}S}$-closed  semigroup, then for every viable idempotent $e\in V\!E(X)$ the maximal subgroup $H_e$ of $X$ is projectively $\mathsf{T_{\!i}S}$-closed and has bounded center.
\end{lemma}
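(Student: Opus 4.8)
The goal is to show that if $X$ is ideally $\mathsf{T_{\!i}S}$-closed and $e\in V\!E(X)$ is a viable idempotent, then the maximal subgroup $H_e$ is projectively $\mathsf{T_{\!i}S}$-closed with bounded center $Z(H_e)$. The natural strategy is a \emph{retraction/pullback} argument: exhibit $H_e$ (or a quotient of it) as a retract of an ideal quotient of $X$, so that the known ideal closedness of $X$ transfers. Concretely, viability of $e$ means $\frac{H_e}{e}=\{x\in X:xe=ex\in H_e\}$ is a coideal, so $I:=X\setminus\frac{H_e}{e}$ is an ideal in $X$. The map $r\colon X\to H_e^0$ (the $0$-extension of $H_e$) sending $x\mapsto ex$ if $x\in\frac{H_e}e$ and $x\mapsto 0$ otherwise should be a semigroup homomorphism: for $x,y\in\frac{H_e}e$ one checks $r(xy)=exy=(ex)(ey)$ using that $ex,ey\in H_e$ commute with $e$ and lie in the group $H_e$; and if either factor lands outside $\frac{H_e}e$ the product does too (ideal), so both sides are $0$. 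This $r$ kills exactly the ideal $I$, so it factors through $X/I$, and $r$ restricted to $H_e$ is the identity. Hence $H_e^0$ is a retract of $X/I$.

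First I would verify carefully that $r$ is a homomorphism — this is where viability is genuinely used, and it is the only nonroutine computation; the identity $(ex)(ey)=e(xy)$ for $x,y\in\frac{H_e}e$ needs $ex\cdot ey = ex\cdot ye = e\cdot xy\cdot e$ type manipulations plus the fact that $H_e$ is a subgroup containing $e$ as identity. Next, I would invoke that ideal $\mathsf{T_{\!i}S}$-closedness of $X$ gives $\mathsf{T_{\!i}S}$-closedness of $X/I$; then I would argue that a retract of a (projectively) $\mathsf{T_{\!i}S}$-closed semigroup inherits the property — for any congruence on $H_e^0$, pull it back along the retraction-section pair to relate quotients, or more simply observe that a continuous embedding $H_e^0\hookrightarrow Y\in\mathsf{T_{\!i}S}$ can be precomposed with $r\colon X/I\to H_e^0$ to get the closedness of the image, and a section lets one recover closedness of $H_e^0$ itself. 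Then $H_e=H_e^0\setminus\{0\}$ is clopen in $H_e^0$, so $H_e$ is also $\mathsf{T_{\!i}S}$-closed; for the \emph{projective} strengthening I would note that any quotient of $H_e$ by a congruence extends to a quotient of $X/I$ or appeal to Theorem~\ref{t:mainP}-style reasoning once commutativity of the relevant piece is in hand.

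For the boundedness of $Z(H_e)$: the center $Z(H_e)$ is a commutative group, and it sits inside $X$, but not necessarily inside $Z(X)$, so Lemma~\ref{l:groups}/Theorem~\ref{t:main}(3) does not apply directly. Instead I would apply Theorem~\ref{t:main}(3) (or Lemma~\ref{l:groups}(2)) \emph{to the semigroup $H_e^0$}, which is ideally $\mathsf{T_{\!z}S}$-closed (hence certainly the conclusion holds in the relevant case $\mathsf i=\mathsf z$; for $\mathsf i\in\{1,2\}$ one uses the implication chain $\mathsf{T_{\!1}S}\Rightarrow\mathsf{T_{\!2}S}\Rightarrow\mathsf{T_{\!z}S}$ of closedness). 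In $H_e^0$ we have $Z(H_e)\subseteq Z(H_e^0)$ since the adjoined zero commutes with everything, so every subgroup of $Z(H_e)$ is a subgroup of the center of the ideally $\mathsf{T_{\!z}S}$-closed semigroup $H_e^0$, hence bounded by Theorem~\ref{t:main}(3).

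**Main obstacle.** The delicate point is establishing that $r$ is a well-defined homomorphism and that ``projectively $\mathsf{T_{\!i}S}$-closed'' (not merely $\mathsf{T_{\!i}S}$-closed) passes to the retract $H_e^0$ of $X/I$ — a general semigroup retract of a projectively closed semigroup need not obviously be projectively closed, so one must check that every congruence on $H_e$ lifts appropriately, e.g. by pulling back along $r|_{H_e^0}$ composed with a congruence and intersecting with $H_e^0$, using that the section $H_e^0\hookrightarrow X/I$ is a homomorphism. Getting this lifting right, while keeping track of the clopen-ness of $H_e$ inside $H_e^0$ so that topological closedness descends, is the crux; the boundedness of $Z(H_e)$ is then a short corollary of Theorem~\ref{t:main}(3) applied to $H_e^0$.
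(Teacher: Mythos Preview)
Your retraction $r:X/I\to H_e^0$, $x\mapsto ex$ for $x\in\frac{H_e}e$ and $I\mapsto 0$, is exactly the map the paper uses, and your observation that $H_e^0$ sits as a retract of $X/I$ is correct. The boundedness of $Z(H_e)$ then follows, as you say, from Lemma~\ref{l:groups}(2) applied to $H_e$ once projective $\mathsf{T_{\!i}S}$-closedness of $H_e$ is known (since projectively $\mathsf{T_{\!i}S}$-closed implies ideally $\mathsf{T_{\!z}S}$-closed).

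The gap is precisely the step you flag as the ``main obstacle,'' and none of your suggested fixes closes it. From ideal $\mathsf{T_{\!i}S}$-closedness of $X$ you only know that $X/I$ is $\mathsf{T_{\!i}S}$-closed --- \emph{not} projectively or absolutely $\mathsf{T_{\!i}S}$-closed. Your precomposition idea (``embed $H_e^0\hookrightarrow Y$ and precompose with $r$'') produces a non-injective map $X/I\to Y$, to which plain $\mathsf{T_{\!i}S}$-closedness of $X/I$ says nothing; that would require absolute closedness. Your pullback idea (``pull a congruence on $H_e$ back along $r$'') produces a congruence on $X/I$, but congruences on $X/I$ do not correspond to ideals of $X$, so ideal closedness of $X$ gives no information about that quotient. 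And the appeal to Theorem~\ref{t:mainP} fails because $H_e$ need not be commutative.

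The paper resolves this by an explicit amalgamation rather than an abstract retract argument. Given a homomorphism $h:H_e\to Y\in\mathsf{T_{\!i}S}$ with discrete image and $Y=\overline{h[H_e]}$, one first shows $Y\setminus h[H_e]$ is an ideal in $Y$ (using discreteness of $h[H_e]$). Then one \emph{glues} $X/I$ to $Y\setminus h[H_e]$ along the map $h_e:X/I\to Y^0$, $h_e(x)=h(xe)$ on $\frac{H_e}e$ and $0$ otherwise: set $Z=(X/I)\cup(Y\setminus h[H_e])$, with cross-products $x\cdot y:=h_e(x)y$, $y\cdot x:=yh_e(x)$, and topologize $Z$ so that $X/I$ is a discrete open subsemigroup and neighborhoods of points in $Y\setminus h[H_e]$ pull in their $h_e$-preimages. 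One checks $Z\in\mathsf{T_{\!i}S}$. Now $\mathsf{T_{\!i}S}$-closedness of $X/I$ forces $X/I$ to be closed in $Z$, which unwinds to $h[H_e]$ being closed in $Y$. This construction is the missing ingredient: it converts the question about a \emph{quotient} of $H_e$ into a question about a discrete \emph{embedding} of $X/I$, which is exactly what $\mathsf{T_{\!i}S}$-closedness of $X/I$ controls.
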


\begin{proof} Assume that $X$ is an ideally $\mathsf{T_{\!i}S}$-closed semigroup and $e\in V\!E(X)$. To prove that the subgroup $H_e$ is projectively $\mathsf{T_{\!i}S}$-closed, take any homomorphism $h:H_e\to Y$ to a topological semigroup $Y\in\mathsf{T_{\!i}S}$ such that $h[H_e]$ is a discrete subgroup in $Y$. We need to prove that $h[H_e]$ is closed in $Y$. Replacing the topological semigroup $Y$ by the closure $\overline{h[H_e]}$ of $h[H_e]$ in $Y$, we can assume that $Y=\overline{h[H_e]}$.

Let us show that the complement $Y\setminus h[H_e]$ is an ideal in $Y$. In the opposite case we can find elements $y\in Y\setminus h[H_e]$ and $x\in Y$ such that $xy$ or $yx$ does not belong to $Y\setminus h[H_e]$. If $xy\notin Y\setminus h[H_e]$, then $xy\in h[H_e]$. Since $h[H_e]$ is a discrete subgroup of $Y$, there exists a neighborhood $O_{xy}$ of $xy$ in $Y$ such that $O_{xy}\cap h[H_e]=\{xy\}$. By the continuity of the semigroup operation in $Y$, there exist neighborhoods $O_x$ and $O_y$ of $x$ and $y$  in $Y$ such that $O_xO_y\subseteq O_{xy}$. Since $h[H_e]$ is dense in $Y$, we can choose an element $z\in O_x\cap h[H_e]$ and conclude that $z(O_y\cap h[H_e])\subseteq h[H_e]\cap O_{xy}=\{xy\}$ and hence $O_{y}\cap h[H_e]\subseteq \{z^{-1}xy\}$, which is not possible as $y\in Y\setminus h[H_e]$ is an accumulation point of the set $h[H_e]$ in $Y$. By analogy we can derive a contradiction assuming that $yx\notin Y\setminus h[H_e]$.

 Let $Y^0=Y\cup\{0\}$ be the $0$-extension of $Y$ by an external isolated zero. Since the idempotent $e$ is viable the set $I\defeq X\setminus\frac{H_e}e$ is an ideal in $X$. So we can consider the quotient semigroup $X/I$, and the homomorphism $h_e:X/I\to Y^0$,  defined by
$$h_e(x)=\begin{cases}h(xe)&\mbox{if $x\in\tfrac{H_e}e$};\\
0&\mbox{otherwise}.
\end{cases}
$$
The definition of the set $\frac{H_e}e=\{x\in X:xe=ex\in H_e\}$ guarantees that the homomorphism $h_e$ is well-defined.

Now consider the semigroup $Z=(X/I)\cup (Y\setminus h[H_e])$ in which $Y\setminus h[H_e]$ and $X/I$ are subsemigroups and for any $x\in X/I$ and  $y\in Y\setminus h[H_e]$ the products $xy$ and $yx$ are defined as $h_e(x)y$ and $yh_e(x)$, respectively. Endow the semigroup $Z$ with the topology $\tau$ consisting of the sets $W\subseteq Z$ such that for every $y\in W\cap (Y\setminus h[H_e])$ there exists a neighborhood $O_y$ of $y$ in $Y$ such that $(O_y\setminus h[H_e])\cup h_e^{-1}[O_y]\subseteq W$. It can be shown that $Z$ is a topological semigroup in the class $\mathsf{T_{\!i}S}$ containing $X/I$ as a discrete  subsemigroup. Since the semigroup $X$ is ideally $\mathsf{T_{\!i}S}$-closed, the semigroup $X/I$ is closed in $Z$, which implies that the set $h[H_e]$ is closed in $Y$.

Therefore, the maximal group $H_e$ is projectively $\mathsf{T_{\!i}S}$-closed. By Lemma~\ref{l:groups}(2), the center $Z(H_e)$ of the group $H_e$ is bounded.
\end{proof}

We say that a class $\C$ of topological semigroups is {\em closed under $0$-extensions} if for any topological semigroup $Y\in\C$ its $0$-extension $Y^0=Y\cup\{0\}$ belongs to the class $\C$. It is clear that for every $\mathsf i\in\{1,2,\mathsf z\}$ the class $\mathsf{T_{\!i}S}$ is closed under $0$-extensions.

\begin{lemma}\label{l:aC=>mg} Let $\C$ be a class of topological semigroups such that $\mathsf{T_{\!z}S}\subseteq\C\subseteq\mathsf{T_{\!1}S}$ and $\C$ is closed under $0$-extensions. If a semigroup $X$ is absolutely $\C$-closed, then for every viable idempotent $e\in V\!E(X)$, the maximal subgroup $H_e$ of $X$ is absolutely $\C$-closed and has finite center $Z(H_e)$.
\end{lemma}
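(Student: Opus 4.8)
The plan is to follow the scheme of the proof of Lemma~\ref{l:group}, replacing the gluing construction used there (which required the image to be discrete) by a single composition into a $0$-extension, which is available precisely because $X$ is absolutely $\C$-closed.

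First I would fix a homomorphism $h\colon H_e\to Y$ to a topological semigroup $Y\in\C$ and, since $e$ is viable, use that $I\defeq X\setminus\frac{H_e}e$ is an ideal in $X$, so that the quotient semigroup $X/I$ and the quotient homomorphism $q\colon X\to X/I$ are defined. Exactly as in the proof of Lemma~\ref{l:group}, the map $h_e\colon X/I\to Y^0$ assigning $h(xe)$ to the class of $x\in\frac{H_e}e$ and $0$ to the class of $x\in I$ is a well-defined homomorphism; here one checks that $\frac{H_e}e$ is a subsemigroup of $X$ with $(xx')e=(xe)(x'e)\in H_e$ for all $x,x'\in\frac{H_e}e$ (using $xe=ex$ and $x'e=ex'$, valid since $x,x'\in\frac{H_e}e$), and that the product of an element of $I$ with anything lies in $I$.

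Next I would compose to obtain a homomorphism $h_e\circ q\colon X\to Y^0$, which is continuous since $X$ carries the discrete topology. As $\C$ is closed under $0$-extensions, $Y^0\in\C$, and since $X$ is absolutely $\C$-closed, the image $(h_e\circ q)[X]=h_e[X/I]=h[H_e]\cup\{0\}$ is closed in $Y^0$; here $\{h(xe):x\in\frac{H_e}e\}=h[H_e]$ because each $g\in H_e$ lies in $\frac{H_e}e$ and satisfies $g=ge$. Intersecting this closed set with the subspace $Y\subseteq Y^0$ (which omits $0$) shows that $h[H_e]$ is closed in $Y$, and hence $H_e$ is absolutely $\C$-closed.

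For the last assertion, since $\mathsf{T_{\!z}S}\subseteq\C$, the group $H_e$ is absolutely $\mathsf{T_{\!z}S}$-closed, hence injectively $\mathsf{T_{\!z}S}$-closed, so Lemma~\ref{l:groups}(1) gives that $Z(H_e)$ is group-finite; being a subgroup of itself, $Z(H_e)$ is finite. I do not anticipate a genuine obstacle here: the one step needing care is the verification that $h_e$ is a well-defined homomorphism, and this is the same bookkeeping already carried out in Lemma~\ref{l:group}; the essential new point is simply that absolute $\C$-closedness lets one bypass the construction of the auxiliary topological semigroup $Z$ from the ideal case and conclude directly via the $0$-extension $Y^0$.
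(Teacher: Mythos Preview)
Your proof is correct and follows essentially the same approach as the paper: both use that $I=X\setminus\frac{H_e}e$ is an ideal and the homomorphism sending $x\mapsto xe$ on $\frac{H_e}e$ and $I\mapsto 0$, together with the assumption that $\C$ is closed under $0$-extensions, to push the absolute $\C$-closedness of $X$ down to $H_e$. The only cosmetic difference is that the paper first shows $H_e$ or $H_e^0$ is a homomorphic image of $X$ (splitting into the cases $I=\emptyset$ and $I\ne\emptyset$) and then passes to $Y^0$, whereas you compose directly with the given $h$ into $Y^0$ in one step; your claim that the image equals $h[H_e]\cup\{0\}$ tacitly assumes $I\ne\emptyset$, but since you then intersect with $Y$ this does not affect the argument.
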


\begin{proof} Since the idempotent $e$ is viable, the set $I\defeq X\setminus \frac{H_e}e$ is an ideal in $X$. By the definition of the set $\frac{H_e}e\defeq\{x\in X:xe=ex\in H_e\}$, the map
$h_e:X\to H_e^0$,
$$h_e(x)=\begin{cases}xe&\mbox{if $x\in\tfrac{H_e}e$};\\
0&\mbox{otherwise};
\end{cases}
$$is a well-defined homomorphism.
If $h_e[X]=H_e$, then the absolute $\C$-closedness of $X$ implies the absolute $\C$-closedness of the maximal subgroup $H_e$ and we are done. If $h_e[X]=H_e^0$, then the absolute $\C$-closedness of $X$ implies the absolute $\C$-closedness of the semigroup $H_e^0$. To prove that the group $H_e$ is absolutely $\C$-closed, take any homomorphism $f:H_e\to Y$ to a topological semigroup $Y\in\C$. Extend the homomorphism $f$ to the homomorphism $f^0:H_e^0\to Y^0$ such that $f(0)=0$. By our assumption, the class $\C$ is closed under $0$-extensions and hence contains the topological semigroup $Y^0$. Since the semigroup $H_e^0$ is absolutely $\C$-closed, the image $f^0[H_e^0]$ is closed in $Y^0$. Since the set $Y$ is closed in $Y^0$, the image $f[H_e]=Y\cap f^0[H_e^0]$ is closed in $Y$, witnessing that the group $H_e$ is absolutely $\C$-closed.
 By Lemma~\ref{l:groups}(1), the center $Z(H_e)$ of the group $H_e$ is finite.
\end{proof}

\section{Proof of Theorem~\ref{t:main}(7)}\label{s:7}

 In this section we prove three lemmas implying the statement 7 of Theorem~\ref{t:main}.

Recall that for a subset $A$ of a semigroup $X$ by $\korin{\IN}{\!A}$ we denote the set $\{x\in X:\exists n\in\IN\;\;x^n\in A\}$.

\begin{lemma}\label{l:root1}
If $X$ is an ideally $\mathsf{T_{\!z}S}$-closed semigroup, then the set $$B=\{e\in V\!E(X):(\korin{\IN}{\!H_e}\cap Z(X))\setminus H_e\neq\emptyset\}$$ is finite.
\end{lemma}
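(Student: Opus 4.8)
\emph{Approach.} I would argue by contradiction: assuming $B$ is infinite, I would exhibit an ideal $J$ of $X$ and a topological semigroup $Y\in\mathsf{T_{\!z}S}$ in which the quotient $X/J$ sits as a discrete, non-closed subsemigroup, contradicting ideal $\mathsf{T_{\!z}S}$-closedness of $X$.

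\emph{Extracting the data.} First I would note that $B\subseteq E(X)\cap Z(X)\cap V\!E(X)$. Indeed, if $e\in B$ is witnessed by some $x\in Z(X)$ with $x^k\in H_e$ and $x\notin H_e$, then $x$ is periodic by Theorem~\ref{t:center}, so some power of $x$ is an idempotent; comparing it with $x^k\in H_e$ forces this idempotent to be $e$, whence $e$ is a power of the central element $x$, so $e\in Z(X)$, while $x\notin H(X)$ (otherwise $x$ would lie in $H_e$). Consequently, if $B$ is infinite we may fix pairwise distinct idempotents $e_n\in B$ and, for each $n$, an element $x_n\in Z(X)\setminus H(X)$ with $x_n^{k_n}=e_n$ for some $k_n\in\IN$; since $x_n\notin H_{e_n}$, the monogenic semigroup $\langle x_n\rangle$ has index $\ge 2$, and $x_n^j e_n=x_n^{j+k_n}\in H_{e_n}$, hence $x_n^j\in\tfrac{H_{e_n}}{e_n}$, for every $j\ge 1$.

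\emph{Organising the idempotents and passing to a quotient.} Since $X$ is $\mathsf{T_{\!z}S}$-closed, $Z(X)$ is chain-finite, so $E(X)\cap Z(X)$ contains no infinite chain; therefore the descending chain of central idempotents $e_0\ge e_0e_1\ge e_0e_1e_2\ge\cdots$ stabilises, and its eventual value $f=e_0\cdots e_N$ satisfies $f\le e_n$ for all $n$. (A Ramsey-type argument additionally allows one to pass to a subfamily of the $e_n$ on which the products $e_ne_m$ behave uniformly.) Because each $e_n$ is viable, each $X\setminus\tfrac{H_{e_n}}{e_n}$ is an ideal; hence so is $J=\bigcap_n\big(X\setminus\tfrac{H_{e_n}}{e_n}\big)$, and by the previous paragraph every power $x_n^j$ avoids $J$. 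Thus in $X/J$ the images $\bar x_n$ are pairwise distinct, retain index $\ge 2$, have pairwise distinct idempotent powers $\bar e_n$, and therefore lie in $Z(X/J)\setminus H(X/J)$; moreover $X/J$ is again ideally $\mathsf{T_{\!z}S}$-closed. So we are reduced to an ideally $\mathsf{T_{\!z}S}$-closed semigroup carrying infinitely many pairwise distinct non-group central elements whose idempotent powers are distinct viable idempotents admitting a common lower bound.

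\emph{The topological construction, and the main obstacle.} From this reduced situation I would derive a contradiction by mimicking — for the (possibly non-commutative) semigroup at hand — the ``not Clifford+finite $\Rightarrow$ not ideally closed'' direction behind Theorem~\ref{t:mainP}: one chooses a (possibly finer) ideal $J'$ built from the coideals $\tfrac{H_{e_n}}{e_n}$ and the lower bound $f$, equips $X/J'$ with a uniformity in the spirit of Lemma~\ref{l:uniform} for which the net $(x_n)$ — together with the products and central subgroups it generates — is Cauchy but non-convergent, passes to the completion, and then takes the subsemigroup generated by $X/J'$ and a limit point $s$ of $(x_n)$, topologised by a Michael-type refinement (as in the proofs of Theorem~\ref{t:main}(3),(4)) that makes $X/J'$ discrete while keeping the whole space zero-dimensional and Tychonoff, the multiplication being extended to $s$ via the structure of the maximal subgroups near $f$. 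Then $X/J'$ is a discrete, non-closed subsemigroup of a member of $\mathsf{T_{\!z}S}$, which is the desired contradiction. I expect this last step to be the crux: one must simultaneously control the products $x_nx_m$ (precisely why the ideal $J'$ must be chosen with care), prevent the adjoined point $s$ from destroying Hausdorff separation or zero-dimensionality, and check that the extended multiplication is both continuous and associative.
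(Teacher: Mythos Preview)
Your overall strategy --- assume $B$ infinite, pass to a suitable ideal $J$, and contradict the $\mathsf{T_{\!z}S}$-closedness of $X/J$ --- matches the paper, and your observation that $B\subseteq E(X)\cap Z(X)$ is exactly how the paper begins. But after that your route diverges and contains a genuine gap.

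The paper does \emph{not} build any topology or uniformity on $X/J$. Instead it constructs $J$ so that $Z(X/J)$ is \emph{singular}, and then simply invokes Theorem~\ref{t:center} (the center of a $\mathsf{T_{\!z}S}$-closed semigroup is nonsingular) to obtain the contradiction. Concretely: since $Z(X)$ is chain-finite, a Ramsey argument extracts an infinite $C\subseteq B$ with $ec\notin\{e,c\}$ for distinct $e,c\in C$ --- an antichain, the \emph{opposite} of your common-lower-bound reduction. For each $e\in C$ one selects $a_e\in (Z(X)\cap\tfrac{H_e}{e})\setminus H_e$ with $a_e^2\in H_e$; this quadratic normalisation is the key algebraic trick you are missing. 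Setting $I_e=\bigcap_{c\in C\setminus{\downarrow}^{\!\circ} e}(X\setminus\tfrac{H_c}{c})$, $J_e=I_e\cup H_e$, and $J=\bigcup_{e\in C}J_e$, one verifies that $A=\{a_e:e\in C\}$ is infinite, disjoint from $J$, yet satisfies $AA\subseteq J$. Thus the image of $A$ in $Z(X/J)$ has square a singleton, and the contradiction is purely algebraic.

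Your plan instead is to make $(x_n)$ Cauchy in some uniformity on $X/J'$ and adjoin a limit point via a Michael-type refinement. This is only sketched, and I do not see how to execute it: the $x_n$ live in pairwise distinct coideals $\tfrac{H_{e_n}}{e_n}$ with no common central subgroup to drive a uniformity of the Lemma~\ref{l:uniform} type, and the common lower bound $f$ you produce does not make them close in any evident way (multiplying by $f$ collapses the distinctions you need, while not multiplying leaves no convergence mechanism). You yourself flag the control of the products $x_nx_m$ as ``the crux'', but give no indication of how to achieve it. The paper's singularity shortcut bypasses all of these difficulties.
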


\begin{proof} To derive a contradiction, assume that the set $B$ is infinite. By Theorem~\ref{t:center}, the semigroup $Z(X)$ is chain-finite, periodic and nonsingular.
Given any idempotent $e\in B$, choose an element $x\in \korin{\IN}{\!H_e}\cap Z(X)$. By the periodicity of $Z(X)$, $x^n=e$ for some $n\in\IN$. It follows from $x\in Z(X)$ that $e\in Z(X)$. Therefore, $B\subseteq V\!E(X)\cap Z(X)$. Since the semigroup $Z(X)$ is chain-finite, we can apply the Ramsey Theorem \cite[Theorem 5]{Ramsey} and find an infinite subset $C\subseteq B$ such that $yx=xy\notin\{x,y\}$ for any distinct elements $x,y\in C$.

For every $e\in E(X)$ let ${\downarrow}^{\!\circ}e=\{x\in E(X):x<e\}$. By the definition of a viable idempotent, for every $e\in V\!E(X)$ the set $X\setminus\frac{H_e}e$ is an ideal in $X$.
Then for every $e\in C$ the intersection $$I_e\defeq \bigcap_{c\in C\setminus{\downarrow}^{\!\circ}e}(X\setminus\tfrac{H_c}c)$$is an ideal in $X$. It follows from $(H_e\cdot\frac{H_e}e)\cup(\frac{H_e}e\cdot H_e)\subseteq H_e$ that the union $J_e\defeq I_e\cup H_e$ is an ideal in $X$ and hence the union
$$J\defeq \bigcup_{e\in C}J_e$$ is an ideal in $X$.

\begin{claim}\label{cl:ae} For any $e\in C$ the set $Z(X)\cap\frac{H_e}e\setminus H_e$ contains an element $a_e$ such that $a_e^2\in H_e$.
\end{claim}

\begin{proof} Since $e\in C\subseteq B$, there exists $x\in (\korin{\IN}{H_e}\cap Z(X))\setminus H_e$. If $x^2\in H_e$, then put $a_e=x$. Otherwise, by the periodicity of $Z(X)$, there exists $n\in\mathbb N$ such that $x^n\in H_e$. Let $m\in\IN$ be the smallest number such that $x^m\in H_e$. Note that $2<m\le n$ and $x^{m-1}\in (\korin{\IN}{\!H_e}\cap Z(X))\setminus H_e$. Let $a_e=x^{m-1}$ and observe that $$a_e^2=x^{2m-2}=x^mx^{m-2}\in H_e\korin{\IN}{\!H_e}\subseteq H_e,$$by Lemma~\ref{l:C-ideal}. Also $a_ee\in \korin{\IN}{\!H_e}H_e\subseteq H_e$ and hence $a_e\in \frac{H_e}e\setminus H_e$.
\end{proof}

Consider the set $A\defeq\{a_e:e\in C\}$.

\begin{claim}\label{cl:AJ} $A\subseteq X\setminus J$.
\end{claim}

\begin{proof}
Assuming that $A\not\subseteq X\setminus J$, we can find two idempotents $e,c\in C$ such that $a_e\in J_c$.
We claim that $e\ne c$. In the opposite case, $a_e\notin H_e$ implies $a_e\in J_c\setminus H_e=J_e\setminus H_e\subseteq I_e\subseteq X\setminus\frac{H_e}e$, which contradicts the choice of $a_e$ in Claim~\ref{cl:ae}. This contradiction shows that $a\ne c$.

Since $a_e\in J_c$, we have $a_e^2\in J_c$ and $e=a_e^2(a_e^2)^{-1}\in J_c=I_c\cup H_c$. If follows from $e\ne c$ that $e\in J_c\setminus H_c\subseteq I_c$. Since $ce=ec\notin\{e,c\}$, we obtain that $e\in C\setminus{\downarrow}c$ and hence $e\in I_c\subseteq X\setminus \frac{H_e}e$, which contradicts the obvious inclusion $e\in\frac{H_e}e$. This contradiction completes the proof of $A\subseteq X\setminus J$.
\end{proof}

\begin{claim}\label{cl:AAJ} For any distinct idempotents $e,c\in C$ we have $a_ea_c\in I_e\cap I_c$ and $a_e\ne a_c$.
\end{claim}

\begin{proof} Assuming that $a_ea_c\notin I_e$, we can find an idempotent $z\in C\setminus {\downarrow}^\circ e$ such that $a_ea_c\in\frac{H_z}z$. Since $X\setminus \frac{H_z}z$ is an ideal, $a_ea_c\in\frac{H_z}z$ implies $a_e,a_c\in \frac{H_z}z$ and hence $za_e,za_c\in H_z$. Since the semigroup $X$ is periodic, there exists $n\in\IN$ such that $a_e^n=e$ and $a_c^n=c$. Then $ze=za_e^n=(za_e)^n\in H_{z}$ and $zc=zc^n\in H_z$. Taking into account that $ze,zc\in H_{z}$ are idempotents, we conclude that $ze=z=zc$ and hence $z\le c$ and $z\le e$. The choice of the set $C$ ensures that $c=z=e$, which contradicts the choice of $e\ne c$. This contradiction shows that $a_ea_c\in I_e$. By analogy we can show that $a_ea_c\in I_c$. Assuming that $a_e=a_c$, we obtain that $a_ea_c=a_e^2\in H_e\subseteq \frac{H_e}e$, which contradicts $a_ea_c\in I_e\subseteq X\setminus\frac{H_e}e$.
\end{proof}

Claims~\ref{cl:AJ} and \ref{cl:AAJ} imply that $A$ is an infinite subset of $X\setminus J$ such that $$AA\subseteq J\cup\{a_e^2:e\in C\}\subseteq J\cup\bigcup_{c\in C}H_e=J.$$

Since $X$ is ideally $\mathsf{T_{\!z}S}$-closed, the quotient semigroup $X/J$ is $\mathsf{T_{\!z}S}$-closed. However, the set $A\subseteq Z(X)\setminus J$ is a witness to the singularity of $Z(X/J)$ which contradicts Theorem~\ref{t:center}. The obtained contradiction implies that the set $B$ is finite.
\end{proof}

The following lemma has been proved in \cite[Lemma 7.5]{BB}.

\begin{lemma}\label{l:root2} Let $X$ be an ideally $\mathsf{T_{\!z}S}$-closed semigroup such that for some $e\in E(X)\cap Z(X)$ the semigroup $H_e\cap Z(X)$ is bounded. Then the set $(\!\sqrt[\infty]{\!H_e}\cap Z(X))\setminus H_e$ is finite.
\end{lemma}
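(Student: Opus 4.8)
Suppose, towards a contradiction, that $A\defeq(\korin{\IN}{H_e}\cap Z(X))\setminus H_e$ is infinite. By Theorem~\ref{t:center} the semigroup $Z(X)$ is chain-finite, periodic and nonsingular; fix $p\in\IN$ with $y^p=e$ for every $y\in H_e\cap Z(X)$. The plan is to build an ideal $J\subseteq X$ for which the center of the (necessarily $\mathsf{T_{\!z}S}$-closed) quotient $X/J$ is singular, in contradiction with Theorem~\ref{t:center}.

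First I would pin down the structure of $A$. For $x\in A$ put $n_x\defeq\min\{n\in\IN:x^n\in H_e\}\ge2$. Since $x$ is central, $(xe)^{n_x}=x^{n_x}e=x^{n_x}\in H_e$, so $xe\in\korin{\IN}{H_e}$ and hence $xe=(xe)e\in\korin{\IN}{H_e}\cdot H_e\subseteq H_e$ by Lemma~\ref{l:C-ideal}; thus $xe\in H_e\cap Z(X)$ and $x^{n_xp}=(x^{n_x})^p=e$. Also $x^m\in H_e$ for all $m\ge n_x$, because $x^m=x^{n_x}\cdot x^{m-n_x}\in H_e\cdot\korin{\IN}{H_e}\subseteq H_e$. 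Consequently $A$ lies in the commutative subsemigroup $T\defeq\{x\in Z(X):\exists n\in\IN\ (x^n=e)\}$ of $Z(X)$, whose only idempotent is $e$, which satisfies $eT=H_e\cap Z(X)$, and for which $A=T\setminus(H_e\cap Z(X))$; moreover $x\mapsto xe$ is a retraction of $T$ onto the bounded group $H_e\cap Z(X)$. Since $x^j\in A$ with $n_{x^j}=\lceil n_x/j\rceil$ for $1\le j<n_x$, the map $x\mapsto x^{n_x-1}$ carries $A$ into $A_2\defeq\{x\in A:x^2\in H_e\}$, and a short argument on whether $A_2$ (or, more generally, some $A_k\defeq\{x\in A:x^k\in H_e\}$) is infinite lets one assume henceforth that $A_2$ is infinite. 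For $x\in A_2$ one has $x^2=(xe)^2$, so, writing $r\colon A_2\to H_e\cap Z(X)$, $x\mapsto xe$, the square $x^2$ depends only on $r(x)$, and $xc=x^2$ where $c\defeq r(x)$.

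Next comes a pigeonhole step inside the bounded group $H_e\cap Z(X)$. If $r[A_2]$ is finite, choose $c$ with infinite fiber $F\defeq r^{-1}(c)$; then $x^2=c^2$ and $xc=c^2$ for all $x\in F$, while $(xy)e=(xe)(ye)=c^2$ for all $x,y\in F$. I would then — first replacing $X$ by the ideally $\mathsf{T_{\!z}S}$-closed quotient $X/\{x\in X:e\notin X^1xX^1\}$, in which $F$ still embeds since $e\in X^1xX^1$ for every $x\in\korin{\IN}{H_e}$ — let $J$ be the ideal of $X$ generated by $\{xy:x,y\in F\}$. Because $F\subseteq Z(X)$, every element of $J$ has the form $wt$ with $w\in FF$ and $t\in X^1$, and using this together with the relation $c^p=e$ one has to verify that $F\cap J=\emptyset$; granting this, $F$ is infinite, $FF\subseteq J$, and the image of $F$ stays central, so $Z(X/J)$ is singular, a contradiction. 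If instead $r[A_2]$ — equivalently $H_e\cap Z(X)$ — is infinite, one argues as in Sections~\ref{s:1-3} and \ref{s:4}: the diagonal character map embeds the bounded infinite group $H_e\cap Z(X)$ densely into a compact (profinite) group $K$; the completion of $X$ with respect to the uniformity built from this topology (Lemma~\ref{l:uniform}) is a topological semigroup in which the closure of $H_e\cap Z(X)$ is isomorphic to $K$; and working with the subsemigroup generated by $X$ and a suitable new element $a\in K\setminus(H_e\cap Z(X))$, together with the Michael modification of its topology, one exhibits a zero-dimensional Tychonoff semigroup containing a non-closed discrete copy of a quotient $X/I$.

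The step I expect to be the main obstacle is the verification that $F\cap J=\emptyset$ in the finite-image case: a priori a product $xy$ with $x,y\in F$ can be multiplied back up into $F$, and the relations $ze=c$, $(wt)e=c^2t$, $c^p=e$ that one gets for a hypothetical $z=wt\in F\cap J$ do not by themselves yield a contradiction, so this almost certainly requires a more careful choice of $F$ (via a further Ramsey-type refinement of $A_2$) than the naive fiber. The infinite-image case is also delicate: in contrast with Section~\ref{s:4}, the new element $a$ now has finite order, so its orbit $a^{\IN}$ is not disjoint from the periodic group $H_e$, and the ideal $\{x:e\notin\overline X^1x\overline X^1\}$ used there must be replaced by a construction adapted to the profinite (rather than torsion-free) geometry of $K$.
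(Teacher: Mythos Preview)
The paper does not actually prove this lemma: it is quoted from \cite[Lemma~7.5]{BB} with no argument given here, so there is no in-paper proof to compare against. Your sketch must stand on its own, and as you yourself acknowledge, it does not.

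The more serious of the two gaps is the infinite-$r[A_2]$ branch, where the plan is not merely incomplete but unworkable. The Section~\ref{s:4} construction yields a topological semigroup only because $S\setminus H_e=\{ga^n:g\in H_e,\;n\ge1\}$ is an \emph{ideal} of $S$, which rests entirely on $a^{\IN}\cap H_e=\emptyset$, i.e.\ on $a$ having infinite order. When $H_e\cap Z(X)$ has exponent $p$, the closed condition $x^p=e$ passes to the closure $K$, so every $a\in K$ also satisfies $a^p=e\in H_e$; products of ``new'' elements then fall back into the discrete part and the Michael modification fails to be a semigroup topology. Moreover, in the uniformity of Lemma~\ref{l:uniform} built from $H=H_e\cap Z(X)$, every $x\in A$ has $ex\ne x$ (since $ex\in H_e$ by Lemma~\ref{l:C-ideal} while $x\notin H_e$), so every point of $A$ is isolated and the completion does not see $A$ at all. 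Your sketch for this branch never actually invokes the infinitude of $A$; if it worked it would show that $H_e\cap Z(X)$ is finite in every ideally $\mathsf{T_{\!z}S}$-closed semigroup, which is false already for the bounded group $X=(\mathbb Z/2\mathbb Z)^{(\w)}$ (projectively $\mathsf{T_{\!1}S}$-closed by Theorem~\ref{t:mainP}).

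The finite-$r[A_2]$ branch is in the right spirit---manufacture an ideal $J$ so that $Z(X/J)$ becomes singular---but, as you say, the verification $F\cap J=\emptyset$ is the entire content and the relations you list do not yield it. The upshot is that the case split on $|r[A_2]|$ is not the right organization: the argument should stay with ideal quotients and singularity throughout, with a more careful choice of ideal than ``generated by $FF$'', and no topologization is needed.
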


The next lemma proves  the last statement of Theorem~\ref{t:main}.

\begin{lemma} If $X$ is an ideally $\mathsf{T_{\!z}S}$-closed  semigroup, then the set $Z(X)\cap\korin{\IN}{V\!E(X)}\setminus H(X)$ is finite.
\end{lemma}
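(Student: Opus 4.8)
The plan is to combine the two preceding lemmas with a decomposition of the set in question according to the idempotent powers. Let $X$ be an ideally $\mathsf{T_{\!z}S}$-closed semigroup and consider an element $x\in Z(X)\cap\korin{\IN}{V\!E(X)}\setminus H(X)$. By definition there is $n\in\IN$ with $x^n\in V\!E(X)$, and since $x\in Z(X)$, periodicity of $Z(X)$ (Theorem~\ref{t:center}) lets us replace $n$ by a larger value and assume $e:=x^n$ is an idempotent; moreover $e=x^n\in Z(X)$, so $e\in E(X)\cap Z(X)\cap V\!E(X)$. Thus $x\in\korin{\IN}{\!H_e}\cap Z(X)$, and since $x\notin H(X)$ we even have $x\in(\korin{\IN}{\!H_e}\cap Z(X))\setminus H_e$. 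Hence
$$Z(X)\cap\korin{\IN}{V\!E(X)}\setminus H(X)\;\subseteq\;\bigcup_{e\in V\!E(X)\cap Z(X)}\big((\korin{\IN}{\!H_e}\cap Z(X))\setminus H_e\big).$$

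First I would split the index set using Lemma~\ref{l:root1}: the set $B=\{e\in V\!E(X):(\korin{\IN}{\!H_e}\cap Z(X))\setminus H_e\ne\emptyset\}$ is finite. So all idempotents $e$ that contribute a nonempty piece to the union above already lie in the finite set $B$, and it suffices to show that for each single $e\in B$ the set $(\korin{\IN}{\!H_e}\cap Z(X))\setminus H_e$ is finite; then the whole union is a finite union of finite sets. Fix $e\in B\subseteq V\!E(X)\cap Z(X)$.

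Next I would invoke Lemma~\ref{l:root2}, which says exactly that $(\korin{\IN}{\!H_e}\cap Z(X))\setminus H_e$ is finite provided the group $H_e\cap Z(X)=Z(H_e)$ is bounded. So the remaining task is to verify boundedness of $Z(H_e)$. Since $e\in V\!E(X)$ is a viable idempotent and $X$ is ideally $\mathsf{T_{\!z}S}$-closed, Lemma~\ref{l:group} (Theorem~\ref{t:main}(5) with $\mathsf i=\mathsf z$) applies and gives precisely that $H_e$ is projectively $\mathsf{T_{\!z}S}$-closed with bounded center $Z(H_e)$. Feeding this into Lemma~\ref{l:root2} yields that $(\korin{\IN}{\!H_e}\cap Z(X))\setminus H_e$ is finite, completing the argument.

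The only genuinely delicate point is the first reduction — making sure that an arbitrary $x$ with $x^n\in V\!E(X)$ genuinely forces its idempotent power $e=x^n$ to be a \emph{viable} idempotent lying in $Z(X)$, so that both Lemma~\ref{l:root1} and Lemma~\ref{l:group} can be applied to it. Here one uses that $V\!E(X)$ consists of idempotents by definition, that $x\in Z(X)$ transfers to $e=x^n\in Z(X)$, and that after enlarging $n$ we may assume $e$ is idempotent by periodicity of $Z(X)$; then $e$ is a viable \emph{and} central idempotent, exactly the hypothesis needed downstream. Everything else is bookkeeping: a finite union over $B$ of sets each shown finite by Lemma~\ref{l:root2}, whose hypothesis is supplied by Lemma~\ref{l:group}.
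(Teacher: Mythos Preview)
Your argument is correct and follows essentially the same route as the paper: decompose over the viable idempotents arising as powers, invoke Lemma~\ref{l:root1} to reduce to finitely many $e$, and then apply Lemma~\ref{l:root2} to each. Two small remarks. First, the claimed equality $H_e\cap Z(X)=Z(H_e)$ is not true in general; only the inclusion $H_e\cap Z(X)\subseteq Z(H_e)$ holds, but that inclusion is all you need to transfer boundedness. The paper instead obtains boundedness of $H_e\cap Z(X)$ more directly from Theorem~\ref{t:main}(3) (group-boundedness of $Z(X)$), avoiding the detour through $Z(H_e)$ and Lemma~\ref{l:group}. Second, your ``delicate point'' is a non-issue: by definition $V\!E(X)\subseteq E(X)$, so $x^n\in V\!E(X)$ already forces $x^n$ to be an idempotent, and no enlargement of $n$ via periodicity is required.
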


\begin{proof} By Theorems~\ref{t:center} and \ref{t:main}(3), the semigroup $Z(X)$ is periodic and group-bounded. By Lemma~\ref{l:root1}, the set $B=\{e\in V\!E(X):Z(X)\cap\korin{\IN}{H_e}\setminus H_e\ne \emptyset\}$ is finite. By Lemma~\ref{l:root2}, for every $e\in B$ the set $Z(X)\cap\korin{\IN}{H_e}\setminus H_e$ is finite. Now we see that the set $$Z(X)\cap\korin{\IN}{V\!E(X)}\setminus H(X)\subseteq\bigcup_{e\in B}Z(X)\cap\korin{\IN}{H_e}\setminus H_e$$ is finite.
\end{proof}

\end{document}